\newcommand{\eps}{\varepsilon}
\newcommand{\N}{{\mathbb N}}
\newcommand{\C}{{\mathbb C}}
\newcommand{\Z}{{\mathbb Z}}
\newcommand{\R}{{\mathbb R}}
\newcommand{\wt}{\widetilde}
\newcommand{\wand}{wandering domain}
\newcommand{\bwd}{Baker wandering domain}
\newcommand{\tef}{transcendental entire function}
\newcommand{\tmf}{transcendental meromorphic function}
\newcommand{\ef}{entire function}
\newcommand{\mf}{meromorphic function}
\newcommand{\sconn}{simply connected}
\newcommand{\mconn}{multiply connected}
\newcommand{\dconn}{doubly connected}
\newcommand{\econn}{eventual connectivity}
\newcommand{\fconn}{finitely connected}
\newcommand{\iconn}{infinitely connected}
\newcommand\qfor{\quad\text{for }}
\theoremstyle{plain}
\newtheorem{theorem}{Theorem}
\newtheorem{corollary}{Corollary}
\newtheorem{lemma}{Lemma}
\theoremstyle{definition}
\theoremstyle{remark}
\theoremstyle{problem}
\theoremstyle{example}
\newtheorem{example}{Example}
\begin{document}


\title[On multiply connected wandering domains of meromorphic functions]{On multiply connected wandering domains of meromorphic functions}

\author{P.J. Rippon}
\address{The Open University \\
   Department of Mathematics \\
   Walton Hall\\
   Milton Keynes MK7 6AA\\
   UK}
\email{p.j.rippon@open.ac.uk}

\author{G.M. Stallard}
\address{The Open University \\
   Department of Mathematics \\
   Walton Hall\\
   Milton Keynes MK7 6AA\\
   UK}
\email{g.m.stallard@open.ac.uk}



\subjclass{Primary: 30D05, 37F10}


\begin{abstract}
We describe conditions under which a multiply connected wandering domain of a transcendental meromorphic function with a finite number of poles must be a Baker wandering domain, and we discuss the possible eventual connectivity of Fatou components of transcendental meromorphic functions. We also show that if $f$ is meromorphic, $U$ is a bounded component of $F(f)$ and $V$ is the component of $F(f)$ such that $f(U)\subset V$, then $f$ maps each component of $\partial U$ onto a component of the boundary of $V$ in $\hat{\C}$. We give examples which show that our results are sharp; for example, we prove that a multiply connected wandering domain can map to a simply connected wandering domain, and vice versa. 
\end{abstract}

\maketitle

\section{Introduction}
\setcounter{equation}{0} Throughout this paper $f:\C\to\hat{\C}$ is a meromorphic function and we denote by $f^{n},\,n=0,1,2,\ldots\,$, the $n$th iterate of~$f$.  
The {\it Fatou set} $F(f)$ is
defined to be the set of points $z \in \C$ such that
$(f^{n})_{n \in \N}$ is well-defined, meromorphic and forms a
normal family in some neighborhood of $z$.  The complement of
$F(f)$ in $\hat{\C}$ is called the {\it Julia set} $J(f)$ of $f$. An
introduction to the properties of these sets can be found in [\ref{wB93}]. In this paper we study the components of $F(f)$, known as {\it Fatou components}, and their boundaries. Note that the notions of closure and complements are always taken with respect to $\hat{\C}$. However, we need to consider both the boundary of a set $U$ in $\C$, for which we use the notation $\partial U$, and the boundary of $U$ in $\hat{\C}$, for which we use $\hat{\partial} U$.

The set $F(f)$ is completely invariant under $f$, as is $J(f)$ in the sense that $z\in J(f)$ if and only if $f(z)\in J(f)$ whenever $f(z)$ is defined. Therefore, any component of $F(f)$ must map into a component of $F(f)$, though this mapping may not be onto because of the possible presence of finite asymptotic values; see Lemma~\ref{Herring} for more detail on this phenomenon. Similar remarks apply to components of $J(f)\cap \C$ and components of $\partial U$, where $U$ is a Fatou component; see Example~\ref{ex5}. 

For any component $U$ of
$F(f)$ there exists, for each $n=0,1,2,\ldots\,$, a component of $F(f)$, which we call $U_n$, such that $f^n(U) \subset U_n$.  If, for some $p\ge 1$, we have $U_p =U_0= U$, then we say
that $U$ is a periodic component of {\it period} $p$, assuming $p$ to be minimal. There are then five possible types of periodic components; see [\ref{wB93}, Theorem~6]. If $U_n$ is not eventually periodic, then we say that $U$ is a {\em wandering component} of $F(f)$, or a {\it wandering domain}.

We use the name {\it Baker wandering domain} to denote a wandering component $U$ of $F(f)$ such that, for $n$ large enough, $U_n$ is a bounded multiply connected component of $F(f)$ which surrounds~0, and $U_n \to \infty$ as $n\to\infty$. An example of this phenomenon with $f$ an {\ef} was first given by Baker in [\ref{iB76}] and examples with either a finite or an infinite number of poles can be obtained by minor modifications of this construction; see [\ref{RS05}].

If $f$ is a {\tef} and $U$ is a multiply connected component of $F(f)$, then $U$ is a {\bwd}; see [\ref{iB75}]. This need not be the case for meromorphic functions, even those with finitely many poles; see [\ref{Pat2}] for examples of meromorphic functions with one pole which have invariant {\mconn} components of $F(f)$. There are also examples of {\mf}s with {\mconn} {\wand}s that are not {\bwd}s. For example, in [\ref{BKL1}] Baker, Kotus and L{\" u} used techniques from approximation theory to construct several meromorphic functions, each with infinitely many poles, having multiply connected {\wand}s of various types. In particular, for $k\in \{2,3,\ldots\}$, they constructed a {\mf} with a $k$-connected bounded {\wand} which is not a {\bwd}; recall that a domain is {\it k-connected} or, equivalently, it has {\it connectivity} $k$ if $\hat{\C}\setminus U$ has $k$ components. 

Baker, Kotus and L{\" u} also showed, in [\ref{BKL3}], that any invariant Fatou component of a meromorphic function is {\sconn}, {\dconn} (in which case the component is a Herman ring) or {\iconn}. This result (apart from the Herman ring statement) was generalised by Bolsch [\ref{aB99}] to periodic Fatou components of functions that are meromorphic outside a small set of essential singularities. 

In this paper, we first study the set $M_F$ of {\tmf}s with only finitely many poles and we give conditions under which a multiply connected {\wand} of a function in $M_F$  must be a {\bwd}. We also construct examples to show that if $f\in M_F$, then a {\mconn} {\wand} of $f$ need not be a {\bwd}. For any {\mf}~$f$ we let sing\,$(f^{-1})$ denote the set of inverse function singularities of $f$, which consists of the critical values and finite asymptotic values of $f$.

In Section~2, we prove the following result. Recall that for a component $U$ of $F(f)$ and for $n=0,1,2,\ldots\,$, we denote by $U_n$ the component of $F(f)$ such that $f^n(U)\subset U_n$.

\begin{theorem}\label{BWD}
Let $f\in M_F$ and let $U$ be a {\mconn} {\wand} of $f$.
\begin{itemize}
\item[(a)]
The component $U$ is a {\bwd} if and only if infinitely many of the components $U_n,\, n=0,1,2,\ldots\,$, are multiply connected.
\item[(b)]
If
\begin{equation}
\text{sing\/}(f^{-1})\cap\bigcup_{n\ge1}U_n =\emptyset,
\end{equation}
then $U_n$ is {\mconn} for $n=0,1,2,\ldots\,$, so $U$ is a {\bwd}.
\end{itemize}
\end{theorem}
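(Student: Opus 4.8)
The plan is to prove part (a) first and then use it, together with the hypothesis of part (b), to deduce part (b). For part (a), one direction is immediate: if $U$ is a {\bwd}, then by definition $U_n$ is multiply connected for all large $n$, so certainly infinitely many $U_n$ are {\mconn}. For the converse, suppose infinitely many $U_n$ are {\mconn}; I want to show $U$ is a {\bwd}, i.e.\ that for large $n$ the $U_n$ are bounded, {\mconn}, surround $0$, and escape to $\infty$. The key structural fact is that $f$ has only finitely many poles, so $F(f)$ has only finitely many unbounded components (an unbounded Fatou component either contains a neighbourhood of $\infty$ or spirals around without $\infty$ in its closure, but with finitely many poles the component containing a punctured neighbourhood of $\infty$ is well-behaved), and crucially a multiply connected Fatou component of $f\in M_F$ that is \emph{unbounded} would have to contain all but finitely many poles in its complementary components, which forces strong restrictions. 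The main step is therefore: show that a {\mconn} wandering domain $U_n$ of $f\in M_F$ is bounded. Granting that, if $U_n$ is bounded and {\mconn} then it has a bounded complementary component, and one argues as in the entire case (cf.\ [\ref{iB75}], [\ref{RS05}]) that $f$ maps a bounded complementary component of $U_n$ into a bounded complementary component of $U_{n+1}$, so $U_{n+1}$ is also {\mconn} and bounded; hence once some $U_N$ is bounded and {\mconn}, all later ones are, and since infinitely many are {\mconn} by hypothesis, all $U_n$ with $n\ge N$ are bounded and {\mconn}. One then invokes the standard argument that such a sequence must surround $0$ for large $n$ and satisfy $U_n\to\infty$: the complementary components $E_n$ with $0$ eventually inside them grow, $f$ maps the bounded "hole" onto a bounded region, and a normal-families/maximum-principle argument on the annuli shows $\operatorname{dist}(0,U_n)\to\infty$.

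For part (b), assume $\text{sing}(f^{-1})\cap\bigcup_{n\ge1}U_n=\emptyset$. The idea is that when the forward orbit of $U$ avoids all singularities of the inverse, the restricted maps $f\colon U_n\to U_{n+1}$ are covering maps (Lemma~\ref{Herring} of the excerpt handles exactly the obstruction caused by finite asymptotic values, and critical values are excluded by hypothesis), so in particular $f(U_n)=U_{n+1}$ and $f$ is a covering. A covering map between domains cannot decrease connectivity in the relevant sense; more precisely, if $U_0=U$ is {\mconn}, I would show by induction that $U_n$ is {\mconn} for all $n$: lift a nontrivial loop, or use that the induced map on fundamental groups combined with the covering structure prevents $U_{n+1}$ from being simply connected when $U_n$ is not. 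Since then \emph{all} $U_n$ are {\mconn}, in particular infinitely many are, so part (a) applies and $U$ is a {\bwd}.

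The main obstacle I anticipate is the boundedness step in part (a): ruling out an unbounded {\mconn} wandering domain for $f\in M_F$. One must use the finiteness of the pole set decisively — an unbounded component of $F(f)$ for a function with finitely many poles, if multiply connected, would have a complementary component, and controlling where that complementary component and its iterates go (and showing this is incompatible with $U$ wandering, or with the location of $\infty$ relative to $\hat{\partial}U_n$) requires care. I would handle it by splitting into cases according to whether $\infty\in\hat\partial U_n$ and using that $f^{-1}$ of a bounded set is bounded away from the poles, or more likely by citing the relevant classification of unbounded multiply connected Fatou components (cf.\ [\ref{BKL3}], [\ref{aB99}]) adapted to $M_F$. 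A secondary subtlety is making sure, in part (b), that the covering-map property really follows from avoiding $\text{sing}(f^{-1})$ on the orbit — this is where Lemma~\ref{Herring} is essential, since a priori $f(U_n)$ could be a proper subset of $U_{n+1}$ even with no critical values, and one needs that the only way this happens is via omitted asymptotic values, which the hypothesis forbids.
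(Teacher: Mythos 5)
Your plan for part~(b) is essentially the paper's: avoiding $\text{sing}(f^{-1})$ on the forward orbit allows the relevant branches of $f^{-n}$ to be continued analytically along null-homotopies in $U_n$, which lifts such a homotopy back to $U$ and contradicts multiple connectivity; combined with part~(a) this gives the Baker wandering domain conclusion. That part is fine.

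Part~(a) has a genuine gap, and it is a fatal one for the route you propose. You reduce the problem to showing each $U_n$ is bounded, and then you assert that, as in the entire case, $f$ maps a bounded complementary component of a bounded multiply connected $U_n$ into a bounded complementary component of $U_{n+1}$, so that ``multiply connected and bounded'' propagates forward. This is precisely what fails for meromorphic functions. For entire functions the maximum principle forces $f(\wt{\gamma})\subset\wt{f(\gamma)}$ for any Jordan curve $\gamma\subset U_n$, but if a pole lies in $\wt{\gamma}$ then the hole of $U_n$ can map onto a region containing $\infty$ and $U_{n+1}$ can perfectly well be simply connected. Indeed this is exactly what the paper's Example~\ref{ex2} exhibits: a function in $M_F$ with $U_0$ bounded and doubly connected but $U_1$ bounded and simply connected. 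The monotonicity that does hold for bounded components (Theorem~\ref{bddU}(b), $c(U_n)\ge c(U_{n+1})$) goes the wrong way for your purposes: it permits connectivity to drop to~$1$ and stay there, which is consistent with only finitely many $U_n$ being multiply connected. So your proposed induction cannot start from ``infinitely many $U_n$ multiply connected'' and upgrade it to ``all large $U_n$ multiply connected.''

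The paper's actual argument is quite different and does not attempt to prove boundedness of the $U_n$ directly, nor to propagate multiple connectivity. Arguing by contradiction, one assumes $U$ is a wandering domain that is not a Baker wandering domain. Then Lemma~\ref{normal} shows that for any Jordan curve $\gamma$ in the orbit, $\wt{f^n(\gamma)}$ can contain a pole for only finitely many $n$ (otherwise the locally uniform limits of $f^{n}$ along a subsequence — which must be constant, since $U$ is wandering — force a large Jordan curve surrounding a fixed disc, which by Lemma~\ref{largecurve} makes $U$ a Baker wandering domain). Using Lemma~\ref{pole} and the hypothesis that infinitely many $U_n$ are multiply connected, one then manufactures infinitely many Jordan curves $\Gamma_k$, in pairwise distinct Fatou components, all surrounding a single fixed pole $p$ (possible because $f\in M_F$) and each with $\wt{f(\Gamma_k)}\cap J(f)=\emptyset$. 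These curves are nested, and again because there are only finitely many poles one can find a closed annulus $A$ between two of them containing no pole; then $f(A)$ is bounded with $\partial f(A)\subset f(\Gamma_{k_1})\cup f(\Gamma_{k_2})$, forcing $f(A)\subset \wt{f(\Gamma_{k_1})}$ or $\wt{f(\Gamma_{k_2})}$, yet $A$ meets $J(f)$, a contradiction. This curve-tracking-through-poles argument is the essential new idea, and it is what your proposal is missing.
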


{\it Remark}\quad After submitting this paper, we learnt of the paper~[\ref{QW06}] by Qiu and Wu, which contains a result closely related to our Theorem~\ref{BWD}(a). Their hypothesis is that $U$ is wandering and all $U_n$ are {\mconn}, and they conclude that $U_n\to\infty$ as $n\to\infty$ and $U_n$ surrounds $0$ for large $n$. From this they deduce that $f$ has infinitely many weakly repelling fixed points. By Theorem~\ref{BWD}(a), this conclusion follows also from the hypothesis that $U$ is wandering and infinitely many $U_n$ are {\mconn}. 

Note that Theorem~1(a) is false without the hypothesis that $f\in M_F$. This is shown by the {\fconn} example of Baker, Kotus and L{\" u}~[\ref{BKL1}] mentioned earlier. In Section~4, we construct an {\iconn} example to show this, as follows.

\begin{example}\label{ex1}
There exists a {\mf} $f$ with infinitely many poles and a {\wand} $U$ such that each component $U_n$, $n=0,1,2,\ldots\,$, is bounded and {\iconn}, but $U$ is not a {\bwd}.
\end{example}

Our second example shows that there does exist a {\mf} $f$ with a {\mconn} {\wand} $U$ such that, for $n\ge 1$, the components $U_n$ are {\sconn}. As far as we know, this is the first such example.  

\begin{example}\label{ex2}
There exists a function $f\in M_F$ with a bounded doubly connected {\wand} $U$ such that each component $U_n,\, n=1,2,\ldots\,$, is bounded  and {\sconn}.
\end{example}

Next we discuss some general connectivity properties of Fatou components of {\tmf}s. Following Kisaka and Shishikura [\ref{KS06}], we define the {\em {\econn}} of a component~$U$ of~$F(f)$ to be~$c$ provided that $U_n$ has connectivity $c$ for all large values of $n$. Kisaka and Shishikura [\ref{KS06}, Theorem~A] showed that if $f$ is entire and $U$ is a {\mconn} component of $F(f)$, and hence a {\bwd}, then the {\econn} of $U$ exists and is either~2 or~$\infty$. Moreover, they constructed the first example of an entire function~$f$ with a {\bwd} with eventual connectivity~2, thus answering an old question; see~[\ref{BKL1}] and~[\ref{wB93}, page~167]. Earlier, Baker~[\ref{iB85}] constructed an example with infinite eventual connectivity. 

For {\mf}s the situation is less straightforward since a {\wand} can be {\mconn} without being a {\bwd}. The following theorem on connectivity properties of bounded components of $F(f)$ is a collection of known results by other authors, stated together for convenience; see Section~3 for references. Here we denote the connectivity of a domain $U$ by $c(U)$.

\begin{theorem}\label{bddU}
Let $f$ be meromorphic, let $U$ be a bounded component of $F(f)$ and let $V$ be the component of $F(f)$ such that $f(U)\subset V$.
\begin{itemize}
\item[(a)]
We have
\[f(U)=V\quad\text{and}\quad f(\partial U)=\hat{\partial} V.\]
\item[(b)]
If $U$ is {\fconn}, then $c(U)\ge c(V)$.
\item[(c)]
If $U$ is {\iconn}, then $V$ is {\iconn}.
\end{itemize}
\end{theorem}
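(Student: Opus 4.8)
The plan is to treat the three parts in order, since each relies on the previous ones, and to reduce everything to statements about Fatou components and their boundaries that are already available in the literature. For part~(a), the key point is that a bounded Fatou component $U$ has its closure $\overline U$ contained in $\C$, so $f$ is holomorphic on a neighbourhood of $\overline U$; in particular $f$ is a (proper, hence surjective onto its image component) open map there. Since $f(U)\subset V$ and $f(\overline U)$ is compact with $f(\partial U)\subset J(f)$, one gets $f(\overline U)=\overline{f(U)}\subset \overline V$ and $f(\partial U)\subset \hat\partial V$; the reverse inclusion $V\subset f(U)$ follows because $f(U)$ is open, closed in $V$ (as $f(\overline U)$ is compact and $f(\partial U)$ avoids $V$), and $V$ is connected. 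This gives $f(U)=V$, and then $f(\partial U)=f(\overline U)\setminus f(U)=\overline V\setminus V=\hat\partial V$. One subtlety to record: $V$ may be unbounded and may contain $\infty$, so it is essential that closures and boundaries here are taken in $\hat\C$, which is exactly why $\hat\partial V$ appears rather than $\partial V$; the compactness of $f(\overline U)$ in $\hat\C$ is what makes the argument go through uniformly.

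For part~(b), assume $U$ is finitely connected with complementary components (in $\hat\C$) $K_1,\dots,K_c$ where $c=c(U)$. Using part~(a), $f(\partial U)=\hat\partial V$, and since $\partial U=\bigcup_i \hat\partial K_i$ (a finite union of connected sets, by finite connectivity each $\hat\partial K_i$ being connected), the image $\hat\partial V$ is covered by the $c$ connected sets $f(\hat\partial K_i)$. Now $\hat\C\setminus V$ has $c(V)$ components, each of whose boundary is a nonempty connected piece of $\hat\partial V$, and these pieces are pairwise disjoint; a covering of them by $c$ connected sets forces $c(V)\le c$. Here one should invoke, or re-prove, the fact that for a finitely connected domain the boundary of each complementary component is connected and these boundaries partition $\hat\partial U$ — this is classical but worth a sentence. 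Part~(c) is the contrapositive-flavoured complement: if $V$ were finitely connected, then applying part~(b) with the roles examined more carefully would be circular, so instead one argues directly that $U$ infinitely connected forces $\hat\partial U$ to have infinitely many components, hence (again via $f(\partial U)=\hat\partial V$ from part~(a), together with the fact that $f$ restricted to a neighbourhood of $\overline U$ is finite-to-one on $\partial U$ away from critical points, or more robustly that a holomorphic map sends a set with infinitely many components to one with infinitely many components only if... ) — and this is where the real work lies.

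The main obstacle I expect is precisely part~(c): showing that infinite connectivity of $U$ is transmitted to $V$. The naive count from part~(b) does not immediately apply because the image of a connected boundary component of $U$ could, a priori, be a connected boundary component of $V$ while many boundary components of $U$ collapse onto the same one, so one cannot just say ``$\infty$ components map to $\infty$ components.'' The resolution should use the third displayed result in the paper — that $f$ maps each component of $\partial U$ onto a component of $\hat\partial V$ — combined with a properness/degree argument: $f|_{\overline U}$ is a proper holomorphic map onto $\overline V$ of some finite degree $d$, so each component of $\hat\partial V$ has at most $d$ preimage components in $\partial U$; if $\partial U$ has infinitely many components, then $\hat\partial V$ must have infinitely many as well, whence $V$ is infinitely connected. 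Making the finiteness of $d$ and the ``each boundary component maps onto a whole boundary component'' claim precise — especially handling poles of $f$ inside $U$, which is exactly why the hypothesis constrains things — is the technical heart, and it is essentially the content of the companion result advertised in the abstract; I would cite it (or its proof) at this point rather than reprove it inline.
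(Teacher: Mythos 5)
Your proposal is correct, and your argument for part~(a) is essentially the standard one underlying Herring's theorem in the bounded case (the paper itself remarks that properness of $f:U\to V$ for bounded $U$ is well known, and cites Herring [Lemma~\ref{Herring}(b)] for the statement). For parts~(b) and~(c), however, you take a genuinely different route from the paper. The paper deduces both from the Riemann--Hurwitz formula, Lemma~\ref{RiemannH}(b): once $f:U\to f(U)=V$ is known to be proper, the identity $c(U)-2=k(c(V)-2)+N$ with $k\ge1$, $N\ge0$ gives $c(U)\ge c(V)$ directly, and since $k$ and $N$ are finite for a bounded $U$, it also forces $c(V)=\infty$ whenever $c(U)=\infty$ --- the infinite-connectivity case being handled by Bolsch's extension of Riemann--Hurwitz. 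Your proof of~(b) instead uses a bare-hands topological count: each complementary component of a domain has connected boundary (this is the classical fact you flag, available in Newman's book which the paper cites elsewhere), so $\hat{\partial}U$ has exactly $c(U)$ components, each mapping to a connected subset of $\hat{\partial}V$, and surjectivity of $f(\partial U)=\hat{\partial}V$ from part~(a) gives $c(U)\ge c(V)$ by pigeonhole. That is a valid and more elementary argument; it avoids Riemann--Hurwitz entirely but gives slightly less information (no degree formula). For~(c) you propose to route through the later Theorem~\ref{component} (each component of $\partial U$ maps \emph{onto} a component of $\hat{\partial}V$) plus a degree bound: each component of $\hat{\partial}V$ has at most $\deg(f|_U)$ preimage components in $\partial U$, so finitely many components of $\hat{\partial}V$ would force finitely many components of $\partial U$. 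This works, and in fact the paper explicitly notes after Theorem~\ref{component} that it ``gives an alternative proof of Theorem~\ref{bddU}(c),'' so you have independently rediscovered the authors' own alternative. The trade-off is that your route for~(c) depends on Theorem~\ref{component}, whose proof (via smooth exhaustions and the path $\Gamma$ accumulating on a boundary component) is considerably longer than the one-line appeal to Bolsch's Riemann--Hurwitz that the paper uses; on the other hand, your approach makes visible the boundary-component combinatorics that Riemann--Hurwitz hides inside the formula. One small clean-up you should make: in~(a), $f$ is only meromorphic, not holomorphic, on a neighbourhood of $\overline U$ (poles may lie in $U$ or on $\partial U$), so state the compactness and open-mapping steps for the analytic map $\overline U\to\hat{\C}$; the argument is unchanged.
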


We remark that if a pole of $f$ lies in $\partial U$, then $\partial V$ is unbounded and $\hat{\partial} V=\partial V\cup \{\infty\}$.

The following corollary of Theorem~\ref{bddU} is immediate.
\begin{corollary}\label{econn}
Let $f$ be meromorphic, let $U$ be a component of $F(f)$ and suppose that the components $U_n$, $n=0,1,2,\ldots\,$, are all bounded. 
\begin{itemize}
\item[(a)]
If $U$ is finitely connected, then 
\[c(U_n)\ge c(U_{n+1}),\qfor n=0,1,2,\ldots,\]
so the {\econn} of $U$ exists and is finite.
\item[(b)]
If $U$ is infinitely connected, then each $U_n,\, n=0,1,2,\ldots\,$, is infinitely connected, so the {\econn} of $U$ is $\infty$. 
\end{itemize}
\end{corollary}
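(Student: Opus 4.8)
The plan is to deduce both parts by a straightforward induction on $n$, applying Theorem~\ref{bddU} at each stage to the pair $(U_n,U_{n+1})$. This is legitimate because, by hypothesis, every $U_n$ is a bounded component of $F(f)$, while by the definition of the sequence $(U_n)$ the set $f(U_n)$ is a connected subset of $F(f)$ containing $f^{n+1}(U)$, so $f(U_n)\subset U_{n+1}$; thus $U_n$ and $U_{n+1}$ may play the roles of $U$ and $V$ in Theorem~\ref{bddU}.

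For part~(a), suppose $U=U_0$ is finitely connected. Theorem~\ref{bddU}(b), applied with $U=U_0$ and $V=U_1$, gives $c(U_0)\ge c(U_1)$; in particular $c(U_1)<\infty$, so $U_1$ is finitely connected and, by hypothesis, bounded. Iterating this step, we obtain $c(U_n)\ge c(U_{n+1})$ together with the finite connectivity of $U_{n+1}$, for every $n\ge 0$. Hence $(c(U_n))_{n\ge0}$ is a non-increasing sequence of positive integers, so it is eventually constant; this eventual value is, by definition, the {\econn} of $U$, and it is finite.

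For part~(b), suppose $U=U_0$ is infinitely connected. Theorem~\ref{bddU}(c), applied with $U=U_0$ and $V=U_1$, shows that $U_1$ is infinitely connected; since $U_1$ is bounded, the argument repeats, and by induction each $U_n$, $n=0,1,2,\ldots\,$, is infinitely connected. Therefore the {\econn} of $U$ is $\infty$.

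There is no real obstacle: the substance lies entirely in Theorem~\ref{bddU}, and the only point needing a moment's care is the bootstrap in part~(a) — to invoke Theorem~\ref{bddU}(b) at step $n$ one needs $U_n$ to be finitely connected, but this is exactly what the inequality $c(U_n)\le c(U_{n-1})<\infty$ from the previous step provides. This is why the statement is immediate once Theorem~\ref{bddU} is available.
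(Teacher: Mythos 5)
Your proof is correct and is exactly the intended argument: the paper itself offers no proof, describing the corollary as ``immediate'' from Theorem~\ref{bddU}, and the inductive application of Theorem~\ref{bddU}(b) (respectively~(c)) to the pairs $(U_n, U_{n+1})$ is precisely what that remark has in mind. Your observation about the bootstrap in part~(a) — that finite connectivity of $U_{n+1}$ must be re-established at each step before Theorem~\ref{bddU}(b) can be invoked again — is the one point worth spelling out, and you do so correctly.
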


Note that in Corollary~\ref{econn} we have $f^n(U)=U_n$, for $n\in \N$, by Theorem~\ref{bddU}(a).

Using Theorem~\ref{BWD}(a) and Corollary~\ref{econn}, we obtain the following result.  Part~(b) generalises to $M_F$ a result of Kisaka and Shishikura~[\ref{KS06}, Theorem~A] for entire functions, mentioned above.

\begin{theorem}\label{econnmf}
Let $f\in M_F$ and let $U$ be a {\wand} of $f$.
\begin{itemize}
\item[(a)]
If \;$U$ is not a {\bwd}, then the eventual connectivity of $U$ is~1.
\item[(b)]
If $U$ is a {\bwd}, then the eventual connectivity of $U$ is either~2 or~$\infty$.
\end{itemize}
\end{theorem}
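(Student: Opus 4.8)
The plan is to read off~(a) directly from Theorem~\ref{BWD}(a), and to obtain~(b) by combining Corollary~\ref{econn} with the Riemann--Hurwitz formula and the standard structure theory of \bwd s; the latter is essentially the argument of Kisaka and Shishikura [\ref{KS06}, Theorem~A] in the entire case, and I would check that it carries over because for $f\in M_F$ the function $f$ is holomorphic on a neighbourhood of $\overline{U}_n$ once $n$ is large.

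For~(a), suppose $U$ is not a \bwd. If $U_N$ is \mconn\ for some $N$, then $U_N$ is a \mconn\ \wand\ of $f$ which, since $U$ is not a \bwd, is itself not a \bwd; so Theorem~\ref{BWD}(a) applied to $U_N$ shows that only finitely many of $U_N,U_{N+1},\dots$ are \mconn. Hence in every case only finitely many of the components $U_n$ are \mconn, so $U_n$ is \sconn\ for all large $n$, and the \econn\ of $U$ is~$1$.

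For~(b), suppose $U$ is a \bwd. Then there is $N$ such that, for $n\ge N$, the component $U_n$ is bounded, \mconn\ and surrounds~$0$, with $U_n\to\infty$; since $f$ has only finitely many poles and $U_n\to\infty$, we may enlarge $N$ so that $f$ is holomorphic on a neighbourhood of $\overline{U}_n$ for $n\ge N$. By Theorem~\ref{bddU}(a) we then have $f(U_n)=U_{n+1}$ and $f|_{U_n}\colon U_n\to U_{n+1}$ is a proper holomorphic map; let $d_n\ge 1$ be its degree. Applying Corollary~\ref{econn} to $U_N$ shows that the \econn\ $c$ of $U$ exists, with $c\in\{2,3,\dots\}\cup\{\infty\}$; if $c=\infty$ we are done, so suppose $3\le c<\infty$ and enlarge $N$ so that $c(U_n)=c$ for all $n\ge N$. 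Then each $U_n$ $(n\ge N)$ is a $c$-connected plane domain of Euler characteristic $2-c<0$, and the Riemann--Hurwitz relation applied to $f|_{U_n}$ gives $2-c=d_n(2-c)-\delta_n$, where $\delta_n\ge 0$ counts the critical points of $f$ in $U_n$ with multiplicity; since $2-c<0$ this forces $d_n=1$ and $\delta_n=0$. Thus $f|_{U_n}$ is a conformal isomorphism of $U_n$ onto $U_{n+1}$ for every $n\ge N$.

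It remains to contradict this, and that is the main obstacle. I would invoke the structural fact that, for all large $n$, $U_n$ contains a round annulus $A_n=\{\,r_n<|z|<R_n\,\}$ with $r_n\to\infty$ and $\log(R_n/r_n)\to\infty$. For $n\ge N$ the restriction $f|_{A_n}$ would then be univalent, so $f$ would be injective on the circle $\{|z|=\rho\}$ for every $\rho\in(r_n,R_n)$. But $f$ is holomorphic near $\infty$ with an essential singularity there, so by Wiman--Valiron theory its central index tends to $\infty$ and $f$ wraps around many times on $\{|z|=\rho\}$ for all large $\rho$ outside an exceptional set of finite logarithmic measure; since $(r_n,R_n)$ has logarithmic length tending to $\infty$, for large $n$ it meets the complement of this exceptional set, contradicting injectivity of $f$ on the corresponding circle. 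Hence $3\le c<\infty$ is impossible, and $c\in\{2,\infty\}$. The point requiring care is that this structure theory of \bwd s (in particular the presence of large round annuli in $U_n$) must be established, or cited, in the generality of $M_F$ rather than only for entire functions; since it concerns the behaviour of $f$ near $\infty$, where the finitely many poles play no role, I expect the entire-function arguments to go through essentially unchanged, but this is where the work lies.
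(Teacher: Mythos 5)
Part~(a) of your argument matches the paper's, and your extra care in applying Theorem~\ref{BWD}(a) to $U_N$ rather than to $U$ itself (since $U$ need not be multiply connected) is a genuine improvement in precision over the paper's one-line statement. For part~(b), you also match the paper through the Riemann--Hurwitz step: Corollary~\ref{econn} gives a well-defined eventual connectivity $c$, the infinitely connected case is immediate, and if $3\le c<\infty$ then $c(U_n)-2=d_n(c(U_{n+1})-2)+\delta_n$ with $c(U_n)=c(U_{n+1})=c>2$ forces $d_n=1$ and $\delta_n=0$, so $f|_{U_n}$ is univalent for large $n$.

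Your route to the contradiction from univalence, however, is genuinely different from the paper's and contains a real gap. You invoke the existence of round annuli $A_n=\{r_n<|z|<R_n\}\subset U_n$ with $r_n\to\infty$ and $\log(R_n/r_n)\to\infty$, plus Wiman--Valiron theory, to rule out univalence. But that large-annulus structure theorem for Baker wandering domains is nowhere established or cited in the paper, neither for entire functions nor in the generality of $M_F$, and you acknowledge this is ``where the work lies.'' In fact the paper deliberately avoids needing any such structural input. Its argument is: for $n$ large, $f$ maps the outer boundary component of $U_n$ onto the outer boundary component of $U_{n+1}$ (citing Dom\'{\i}nguez and [\ref{RS05}, Lemma~4]); so, taking a Jordan curve $\gamma_n\subset U_n$ homotopic in $U_n$ to its outer boundary, the image $f(\gamma_n)$ is a Jordan curve in $U_{n+1}$ winding once around $0$ (univalence gives winding number exactly one). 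Since $f\in M_F$ has at most finitely many poles, the argument principle applied to $\gamma_n$ shows that $f$ assumes each fixed value $w\in\C$ at most $1+P$ times inside $\gamma_n$, where $P$ is the total number of poles; since the interiors of the $\gamma_n$ exhaust $\C$, the transcendental function $f$ would take every value at most finitely often, contradicting Picard's theorem. This is elementary, self-contained given the cited boundary-mapping lemma, and replaces the deep structural fact your version rests on. You should either supply a proof (or a precise reference valid for $M_F$) of the large-annulus property, or switch to the outer-boundary/argument-principle route.
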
 
In the example of Baker, Kotus and L{\" u} mentioned after Theorem~1, it can be shown that the wandering domains have eventual connectivity $k$, where $k\in \{2,3,\ldots\}$. Thus part~(a) of Theorem~\ref{econnmf} is false without the assumption that $f\in M_F$. By modifying their example, we can obtain a {\mf} $f$ with a {\bwd} whose eventual connectivity is $k$, where $k\in \{2,3,\ldots\}$, so Theorem~\ref{econnmf}(b) is also false without the assumption that $f\in M_F$. The idea of the modification is to replace the sequence of $k$-connected domains used in the original construction, which are almost invariant under the mapping $z\mapsto z+10$, by a sequence of similarly shaped domains which are almost invariant under $z\mapsto 10z$; we omit the details which are routine but lengthy. 

We now discuss several examples related to Theorem~\ref{bddU}. First, it is well known that Theorem~\ref{bddU}(a) is false if $U$ is unbounded. For example, the function $f(z)=e^z-1$ has an unbounded immediate parabolic basin $U$, which contains the singularity $-1$, such that $f(U)= U\setminus \{-1\}$. On the other hand, for almost all $\lambda$ with $|\lambda|=1$, the function $f(z)=\lambda (e^{z}-1)$ has an unbounded invariant Siegel disc $U$, whose boundary contains the singularity $-\lambda$, such that $f(\partial U)\subset \partial U\setminus \{-\lambda\}$; see~[\ref{lR04}] and ~[\ref{pR94}].

Next we show that the requirement that $U$ is bounded is essential in Theorem~\ref{bddU}(b), as is the requirement that all $U_n$ are bounded in the statement that $c(U_n)\ge c(U_{n+1})$, for $n=0,1,2,\ldots\,$, in Corollary~\ref{econn}(a).

\begin{example}\label{ex3}
There exists a function $f\in M_F$ with a bounded {\sconn} {\wand} $U$ such that \begin{itemize}
\item[(a)]
$f(U)$ is an unbounded {\sconn} component of $F(f)$ and $\partial f(U)$ consists of two unbounded components;
\item[(b)] 
$f^2(U)$ is a bounded {\dconn} component of $F(f)$;
\item[(c)]
$f^n(U),\, n\ge 3$, are bounded {\sconn} components of $F(f)$.
\end{itemize}
Thus $U_1=f(U)$ is unbounded and $c(U_1)=1<2=c(U_2)$.
\end{example} 

The requirement that $U$ is bounded is also essential in Theorem~\ref{bddU}(c), as is the requirement that all $U_n$ are bounded in Corollary~\ref{econn}(b).

\begin{example}\label{ex4}
There exists a function $f\in M_F$ with a bounded {\iconn} {\wand} $U$ such that \begin{itemize}
\item[(a)]
$f(U)$ is an unbounded {\iconn} component of $F(f)$;
\item[(b)] 
$f^2(U)$ is contained in a bounded {\dconn} component of $F(f)$;
\item[(c)]
$f^n(U),\, n\ge 3$, are contained in bounded {\sconn} components of $F(f)$.
\end{itemize}
Thus $U_1=f(U)$ is unbounded and {\iconn}, and the eventual connectivity of~$U_1$ is~1.
\end{example} 
The following result is closely related to Theorem~\ref{bddU}. This result may also be known, but we have not been able to find a reference to it in this generality. Note that Theorem~\ref{component} gives an alternative proof of Theorem~\ref{bddU}(c).
\begin{theorem}\label{component}
Let $f$ be meromorphic, let $U$ be a bounded component of $F(f)$ and let $V$ be the component of $F(f)$ such that $f(U)\subset V$. Then $f$ maps each component of $\partial U$ onto a component of $\hat{\partial} V$.
\end{theorem}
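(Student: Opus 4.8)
The plan is to work with the boundary of $U$ inside $\C$, use the fact that $U$ is bounded to control the geometry, and exploit the completeness of invariance of $F(f)$ and $J(f)$ together with the open mapping theorem for meromorphic functions. First I would fix a component $\gamma$ of $\partial U$; since $U$ is bounded, $\gamma$ is a compact connected subset of $J(f)\cap\C$, and it is contained in $\partial U$, which by Theorem~\ref{bddU}(a) maps onto $\hat\partial V$. Since $f$ is continuous on $\C$ and $\gamma$ is compact and connected, $f(\gamma)$ is a compact connected subset of $\hat\partial V$, hence is contained in a single component of $\hat\partial V$; call it $\sigma$. The substance of the theorem is the reverse inclusion $\sigma\subset f(\gamma)$, i.e.\ that $f$ does not map $\gamma$ strictly inside $\sigma$.

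The key step for surjectivity is a standard openness-versus-closedness argument carried out relative to $\sigma$. The set $f(\gamma)$ is closed in $\sigma$ because $\gamma$ is compact. For openness, take $w_0=f(z_0)$ with $z_0\in\gamma$; I want to show a relative neighbourhood of $w_0$ in $\sigma$ lies in $f(\gamma)$. Here one must separate two cases. If $w_0\ne\infty$ and $z_0$ is not a critical point, then $f$ is a local homeomorphism near $z_0$, and one checks that points of $J(f)$ near $w_0$ pull back to points of $J(f)$ near $z_0$; the delicate part is to confirm that these preimages actually lie on the boundary component $\gamma$ of $U$, rather than on some other component of $\partial U$ or in the interior --- this is where boundedness of $U$ and the local structure of $\partial U$ relative to $U$ are used (a small disc around $z_0$ meets $U$ in an open set whose boundary near $z_0$ is exactly an arc of $\gamma$, and $f$ maps $U$ into $V$). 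The cases where $z_0$ is a critical point, or $w_0=\infty$ (i.e.\ $z_0$ is a pole on $\gamma$), are handled similarly using that a nonconstant meromorphic map is still open at such points, with the local degree finite.

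An alternative, and probably cleaner, route is to argue by contradiction: suppose $\sigma\setminus f(\gamma)\ne\emptyset$. Since $f(\partial U)=\hat\partial V\supset\sigma$ by Theorem~\ref{bddU}(a), every point of $\sigma\setminus f(\gamma)$ is the image of a point of some other component $\gamma'$ of $\partial U$. Combining the relative-closedness of $f(\gamma)$ in $\sigma$ (from compactness of $\gamma$) with the relative-openness just described, $f(\gamma)$ is both open and closed in the connected set $\sigma$; since $f(\gamma)\ne\emptyset$, we get $f(\gamma)=\sigma$, the desired conclusion. This packaging avoids any explicit covering-space bookkeeping.

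The main obstacle I anticipate is the local step that justifies openness near a non-critical, non-pole point $z_0\in\gamma$: one needs that every point of $J(f)$ sufficiently close to $f(z_0)$ has a preimage lying on the \emph{same} boundary component $\gamma$. The natural tool is to choose a small round disc $D$ about $z_0$ on which $f$ is univalent, note that $f(D)$ is an open neighbourhood of $w_0$, and use that $D\cap U$ is mapped into $V$ while $D\setminus\overline U$ is mapped into $\hat\C\setminus\overline V$; the frontier $D\cap\gamma$ then maps onto $f(D)\cap\hat\partial V$. Making this precise requires knowing that $D$ can be chosen so that $D\cap\partial U=D\cap\gamma$ (i.e.\ distinct boundary components of a bounded open set are separated), which is where compactness of $\gamma$ and of the other components is used, possibly together with a connectedness argument for $\partial U$ near $\gamma$. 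Once this local picture is in place, the global topological argument is routine.
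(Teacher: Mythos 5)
Your open-and-closed argument inside a single boundary component $\sigma$ of $\hat{\partial} V$ is a genuinely different strategy from the paper's, which argues by contradiction using a smooth exhaustion $V_n$ of $V$, the nested ``ends'' $H_n$ of $V$ near $\beta$, the corresponding components $G_n$ of $U\cap f^{-1}(H_n)$ shrinking to $\alpha$, and the properness of $f:G_n\to H_n$. However, your argument has a gap at the step you yourself flag as the main obstacle, and the fix you propose does not work.

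The claim that a small disc $D$ about $z_0\in\gamma$ can be chosen with $D\cap\partial U=D\cap\gamma$ is false in general. Distinct components of the closed set $\partial U$ need not be at positive distance from one another: for a bounded, infinitely connected $U$, infinitely many boundary components can accumulate on $\gamma$ (compare $U=\mathbb{D}\setminus\bigcup_{n\ge2}\overline{B(1/n,1/n^3)}$, where the boundary circles accumulate at the singleton component $\{0\}$). Compactness of the individual components does not help; only \emph{finitely} many pairwise disjoint compact sets are uniformly separated. Since Theorem~\ref{component} is asserted (and used, e.g.\ to reprove Theorem~\ref{bddU}(c)) precisely in the infinitely connected case, this is not a peripheral technicality. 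There is a second, related difficulty: even if $D\cap\partial U=D\cap\gamma$, you also need $D\cap J(f)=D\cap\partial U$, because the local inverse of $f$ sends a point of $\hat{\partial} V\subset J(f)$ only to a point of $J(f)$, not automatically to a point of $\partial U$; and $J(f)\cap D$ may contain points that do not lie on $\partial U$ at all. Your sketch of a ``connectedness argument for $\partial U$ near $\gamma$'' would have to overcome both of these, and it is not clear how. The paper's proof sidesteps exactly this local picture: instead of analysing $\partial U$ near a point of $\gamma$, it works with the nested open sets $G_n\subset U$ whose closures shrink to $\alpha$, shows that preimages under the proper map $f:G_n\to H_n$ of a sequence converging to a missing value $w_0\in\beta\setminus f(\alpha)$ must accumulate in $\overline{G_n}$, and then derives a contradiction using the finiteness of $f^{-1}(w_0)\cap\partial U$. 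If you want to salvage a local argument, you would need to replace ``a round disc $D$ with $D\cap\partial U=D\cap\gamma$'' by something like the sets $\overline{G_n}$, which decrease to $\alpha$ and avoid the finitely many preimages of a chosen $w_0$; at that point you have essentially reconstructed the paper's proof.
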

We remark that if a pole of $f$ lies in a component of $\partial U$, then the image of that component may be the union of more than one component of $\partial V$ together with $\{\infty\}$.

Our final example shows that Theorem~\ref{component} is false if $U$ is unbounded.

\begin{example}\label{ex5}
The function $f(z)=ze^z$ has an unbounded immediate parabolic basin $U$ whose boundary $\partial U$ has components $\alpha$ and $\alpha'$ such that $f(\alpha)=\alpha'\setminus\{0\}$.
\end{example}

Finally, for an unbounded component $U$ of $F(f)$, we can obtain the following result relating the boundary connectedness properties of $U$ to those of the component of $F(f)$ which contains $f(U)$.
\begin{theorem}\label{unbddU}
Let $f$ be a {\tmf}, let $U$ be an unbounded component of $F(f)$ and let $V$ be the component of $F(f)$ such that $f(U)\subset V$. 
\begin{itemize}
\item[(a)]
We have
\[\hat{\partial} V=\overline{f(\partial U)}.\]
\item[(b)]
If $\partial U$ has only a finite number $N$ of components, then $\hat{\partial} V$ has at most $N$ components.
\item[(c)]
If $c(V)>c(U)$, then there exists at least one unbounded component of $\partial U$ which has a bounded image.
\end{itemize}
\end{theorem}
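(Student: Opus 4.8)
The plan is to adapt the boundary-mapping machinery of Theorems~\ref{bddU} and~\ref{component} to the unbounded setting, where the new feature is that $\infty$ may lie in $\hat\partial V$ as a limit of images of points near $\partial U$ even though $\partial U$ itself need not approach the corresponding preimages. For part~(a), the inclusion $f(\partial U)\subset\hat\partial V$ follows from complete invariance of $F(f)$ and $J(f)$: a point of $\partial U$ lies in $J(f)$, and since $U$ is a component of $F(f)$ mapped into $V$, its boundary maps into $\overline V\cap J(f)=\hat\partial V$; taking closures (in $\hat\C$) gives $\overline{f(\partial U)}\subset\hat\partial V$. For the reverse inclusion, let $w\in\hat\partial V$. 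If $w\neq\infty$, pick $w_n\in V$ with $w_n\to w$, together with preimages; the usual argument (using that $f$ restricted to $U$ omits at most the asymptotic values in $V$, as in Lemma~\ref{Herring}) produces points of $U$ accumulating at a point $z\in\partial U$ with $f(z)=w$, unless $w$ is an asymptotic value approached along a path to $\infty$ in $U$, in which case $w\in\overline{f(\partial U)}$ still holds because $\partial U$ is unbounded and $f$ is continuous on $\C$. If $w=\infty$, then since $f$ is transcendental and $\partial U$ is unbounded, one shows $\infty\in\overline{f(\partial U)}$ directly. This establishes~(a).

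For part~(b), I would argue that the continuous image of a connected set is connected, so $f(\gamma)$ is connected for each component $\gamma$ of $\partial U$; hence $\overline{f(\gamma)}$ is a connected subset of $\hat\partial V$. Since $\partial U=\gamma_1\cup\dots\cup\gamma_N$ with each $\gamma_j$ connected, part~(a) gives $\hat\partial V=\overline{f(\partial U)}=\bigcup_{j=1}^N\overline{f(\gamma_j)}$. A compact Hausdorff space (here $\hat\partial V$, a closed subset of $\hat\C$) that is a finite union of $N$ connected closed sets has at most $N$ connected components, which is the claim. One must be slightly careful that $\overline{f(\gamma_j)}$ can meet $\infty$ for several $j$ simultaneously, but this only decreases the number of components, so the bound still holds.

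For part~(c), suppose for contradiction that every unbounded component of $\partial U$ has an unbounded image. The strategy is to show that then $c(V)\le c(U)$, contradicting the hypothesis. Write $\partial U=\bigcup_j\gamma_j$. The bounded components $\gamma_j$ have compact connected images $\overline{f(\gamma_j)}$; the unbounded ones have, by assumption, unbounded images, so $\infty\in\overline{f(\gamma_j)}$ for each unbounded $\gamma_j$. Thus all the unbounded boundary components of $U$ contribute images lying in the single connected set obtained by adjoining $\infty$, and in fact these images, together with any bounded images that also accumulate at $\infty$, amalgamate into at most as many components of $\hat\partial V$ as there were components of $\partial U$; more precisely, a counting argument as in~(b), now tracking which images contain $\infty$, shows the number of components of $\hat\partial V$ is at most the number of components of $\partial U$. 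Since for an unbounded domain $U$ one has $c(U)=$ (number of components of $\hat\partial U$) and $\partial U$ and $\hat\partial U$ have the same number of components when $\infty\notin\overline{\partial U}$, and otherwise $\hat\partial U$ has one fewer, a short case analysis reconciling the roles of $\infty$ on the $U$ side and the $V$ side yields $c(V)\le c(U)$. The main obstacle is exactly this bookkeeping: the point $\infty$ behaves asymmetrically, since $f(\infty)$ is undefined and $\infty$ can be an accumulation point of $f(\partial U)$ without being an image of any finite boundary point, so the proof of~(c) must carefully separate the case where $\infty\in\overline{\partial U}$ from the case where it is not, and in each case verify that the hypothesis ``no unbounded component of $\partial U$ has bounded image'' forces the merging of boundary components of $V$ needed to get $c(V)\le c(U)$.
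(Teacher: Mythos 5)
Your part~(b) matches the paper's argument, but parts~(a) and~(c) both have genuine gaps, and in both cases the paper's actual proof uses tools your sketch does not supply.

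For part~(a), the forward inclusion $\overline{f(\partial U)}\subset\hat\partial V$ is fine, but the reverse inclusion is where the real work lies and your argument does not close it. When $w\in\hat\partial V$ and your preimage sequence $z_n\in U$ escapes to $\infty$, you assert that $w$ must be ``an asymptotic value approached along a path to $\infty$ in $U$''. This is false: a sequence of preimages tending to $\infty$ only shows $w\in C_U(f,\infty)$, the cluster set of $f$ at $\infty$ along $U$, which is in general a large (even uncountable) set and is not contained in the asymptotic values. Your subsequent claim that such $w$ lies in $\overline{f(\partial U)}$ ``because $\partial U$ is unbounded and $f$ is continuous on $\C$'' is unjustified; unboundedness of $\partial U$ gives no control on the values $f$ takes near $\infty$ on $\partial U$. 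The paper instead records the identity
\[
\hat\partial V = f(\partial U)\cup\bigl(C_U(f,\infty)\setminus (f(U)\cup E)\bigr),
\]
where $E=V\setminus f(U)$, and then invokes the Beurling--Kunugui theorem (Lemma~\ref{BeurKun}) to show that $\Omega\setminus f(U)$ is at most countable, where $\Omega=C_U(f,\infty)\setminus C_{\partial U}(f,\infty)$. Combined with the fact that $\hat\partial V$ is perfect, this forces $\hat\partial V\setminus\overline{f(\partial U)}=\emptyset$. Some such cluster-set input is essential, and it is missing from your sketch.

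For part~(c), your strategy is to show by contradiction that if every unbounded component of $\partial U$ has unbounded image, then $c(V)\le c(U)$, via a count of components of $\hat\partial V$. You yourself flag ``the main obstacle is exactly this bookkeeping'' and leave the case analysis around $\infty$ unresolved; as written this is a plan rather than a proof, and it is not clear the count comes out right (the number of components of $\hat\partial V$ need not equal $c(V)$, and the merging through $\infty$ affects the two sides asymmetrically). The paper's proof is structurally different and avoids the count entirely: it notes that $c(V)>c(U)$ forces the existence of a bounded boundary component $\beta_0$ of $V$ containing none of the images of the (finitely many) bounded boundary components of $U$; a Jordan curve $\Gamma\subset V$ separating $\beta_0$ from the others is produced using a topological separation theorem; part~(a) then guarantees a point $z_0\in\partial U$ with $f(z_0)$ on the $\beta_0$-side of $\Gamma$, and since $f(\partial U)$ cannot meet $\Gamma\subset F(f)$, the whole component of $\partial U$ through $z_0$ must map inside $\Gamma$ and hence be bounded in image; by the choice of $\beta_0$ this component is unbounded. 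If you want to salvage your approach, you would need to carry out the promised case analysis in full; as it stands it is a gap, whereas the separation argument gives the conclusion directly.
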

Example~3 shows that the situation in Theorem~\ref{unbddU}(c) can occur, since in this example we have $c(U_2)>c(U_1)$.

{\bf Acknowledgement}\quad We are grateful to the referee for many helpful suggestions and improvements to the paper.  

\section{Proof of Theorem~\ref{BWD}}
\setcounter{equation}{0}
First, we give several results needed in the proof of Theorem~\ref{BWD}. 

\begin{lemma}\label{largecurve}
Let $f\in M_F$. There exists $r_0>0$ such that if $U$ is a component of $F(f)$ which contains a Jordan curve surrounding  $\{z:|z|\le r_0\}$, then $U$ is a {\bwd}.
\end{lemma}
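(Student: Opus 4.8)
The plan is to exploit the structure of functions in $M_F$: since $f$ has only finitely many poles, there is a radius $R_0$ outside of which $f$ is holomorphic and, moreover, behaves like a polynomial-type map in the sense that $|f(z)|\to\infty$ as $|z|\to\infty$ (a standard consequence of having finitely many poles together with transcendentality — one can quote the relevant growth statement, e.g. that $\min_{|z|=r}|f(z)|\to\infty$ along a sequence of radii, or use the classical Bergweiler--type estimate that for large $r$ one can find $R>r$ with $f(\{|z|=r\})$ surrounding $\{|z|\le R\}$). First I would fix $r_0$ large enough that all poles of $f$ lie in $\{|z|<r_0\}$ and that the iterates of the circle $\{|z|=r_0\}$ escape to infinity in a controlled way.

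The key steps, in order, are as follows. Let $U$ be a component of $F(f)$ containing a Jordan curve $\gamma$ surrounding $\{|z|\le r_0\}$. Step one: show that $\gamma_n := f^n(\gamma)$ is a curve in $U_n$ which again surrounds a disc of radius tending to infinity; this uses the maximum principle together with the growth of $\min_{|z|=r}|f(z)|$, exactly the argument used by Baker for entire functions, now valid because no poles lie in the region exterior to $\{|z|<r_0\}$ bounded by $\gamma$, so $f$ restricted there is holomorphic and the minimum modulus estimate applies. Step two: conclude that each $U_n$ contains a curve surrounding $0$ and that these curves, hence the domains $U_n$, tend to $\infty$; in particular the $U_n$ are pairwise disjoint for large $n$, so $U$ is a wandering domain. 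Step three: show each $U_n$ (for $n$ large) is multiply connected — indeed the curve $\gamma_n$ surrounds points of $J(f)$ (the fixed points or poles near $0$, or simply the bounded set $\{|z|\le r_0\}$ meets $J(f)$ since $J(f)$ is infinite and the escaping-to-infinity behaviour forces $J$-points to be trapped), so $\gamma_n$ is not null-homotopic in $U_n$. Step four: assemble these facts to match the definition of a \bwd: $U_n$ is bounded (it lies inside the image under $f$ of a bounded region once $n$ is large, again using finitely many poles and the escaping behaviour), multiply connected, surrounds $0$, and $U_n\to\infty$.

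The main obstacle I expect is Step three, namely verifying that the relevant $U_n$ are genuinely multiply connected and bounded, rather than, say, unbounded components that merely happen to contain a large circle. This is where the hypothesis $f\in M_F$ is essential: with infinitely many poles one could have the curve $\gamma_n$ passing near poles so that its image is unbounded. To handle it I would argue that, since only finitely many poles exist, for $n$ large the curve $\gamma_n$ and the bounded region it encloses avoid all poles, so $\gamma_{n+1}=f(\gamma_n)$ is a bounded curve enclosing a bounded region containing $\{|z|\le r_0\}$ (by the minimum modulus growth), whence $U_{n+1}$ is bounded; and multiple connectivity follows because $U_{n+1}$ cannot contain the bounded complementary region of $\gamma_{n+1}$, as that region meets $J(f)$ — for instance it contains $\{|z|\le r_0\}$, and $J(f)\cap\{|z|\le r_0\}\ne\emptyset$ because otherwise the whole disc would lie in a single unbounded escaping Fatou component, contradicting that points near $0$ do not escape for a transcendental map with $0$ not an omitted-type exceptional configuration (alternatively, one quotes that $J(f)$ is unbounded and perfect, hence must intersect every sufficiently large disc once the escaping dynamics outside $\{|z|=r_0\}$ are pinned down). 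Once boundedness and non-triviality of the enclosed region are secured, the remaining assertions are routine.
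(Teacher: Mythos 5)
The paper disposes of this lemma in two sentences: it cites [RS05, Theorem~3], which asserts (for $f\in M_F$) the existence of an $r_0>0$ such that if $\{z:|z|\le r_0\}$ lies in a bounded complementary component of a Fatou component $U$, then $U$ is a Baker wandering domain, and then remarks that the proof given there only ever uses the existence in $U$ of a Jordan curve winding round $\{z:|z|\le r_0\}$, so it applies verbatim to the slightly more general hypothesis stated in Lemma~1. Your proposal instead attempts to reconstruct the underlying Baker-style argument from scratch, which is a legitimate alternative strategy but also exactly the content of the cited theorem.

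The problem is that the crucial step of your reconstruction — your Step one, that the curves $\gamma_n=f^n(\gamma)$ escape to infinity \emph{and} enclose discs whose radii tend to $\infty$ — is precisely the hard part, and you only wave at it. Saying it is ``exactly the argument used by Baker for entire functions'' is itself a citation, and it is not automatic that Baker's argument transfers: one needs to know that the minimum modulus of $f^n$ on $\gamma$ tends to infinity, which is not true for a general entire (let alone meromorphic) $f$ without careful harmonic-measure or $\cos\pi\rho$-type estimates, and the adaptation of these estimates to a meromorphic $f$ with finitely many poles (where one must work outside a disc containing the poles, choose the right radii, and still control the inner radius of $\gamma_n$) is essentially the content of [RS05, Theorem~3]. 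So your Step one has a genuine gap unless it is filled by the very reference the paper uses. The remaining steps are fine once that one is granted: you should simply fix $r_0$ large enough at the outset that $\{z:|z|\le r_0\}$ contains all poles of $f$ and meets $J(f)$ (both possible since there are finitely many poles and $J(f)\neq\emptyset$), after which your arguments for multiple connectivity and boundedness go through without the circuitous discussion you give in Step three. In short, your route is not wrong, but it amounts to reproving the cited theorem; the paper's proof is shorter because it recognizes that the cited proof already works under the weaker hypothesis, and your sketch does not supply the minimum-modulus analysis that would make it self-contained.
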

\begin{proof}
In [\ref{RS05}, Theorem~3] we proved that if $f\in M_F$, then there exists $r_0>0$ such that if $U$ is a component of $F(f)$ and $\{z:|z|\le r_0\}$ lies in a bounded complementary component of $U$, then $U$ is a {\bwd}. The proof given there depends only on the fact that $U$ contains a Jordan curve which winds round $\{z:|z|\le r_0\}$ and so it yields the above more general result.
\end{proof}

Now we denote by $M$ the set of {\tmf}s $f$ with at least one pole which is not an omitted value of $f$; in the language of [\ref{BKL2}], $f$ satisfies Assumption~A or is a `general {\mf}'. We also introduce the notation $\wt{E}$ to denote the union of a set $E$ and its bounded complementary components. 
 
\begin{lemma}\label{pole}
Let $f\in M$ and let $U$ be a component of $F(f)$. If there is a Jordan curve $\gamma$ in $U$ such that $\wt{\gamma}$ meets $J(f)$, then  for some $n\ge 0$, $\wt{f^n(\gamma)}$ contains a pole of $f$.
\end{lemma}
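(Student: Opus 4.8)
\textbf{Proof proposal for Lemma~\ref{pole}.}

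The plan is to argue by contradiction, combined with a maximum-principle argument on the sequence of ``filled'' curves $\wt{f^n(\gamma)}$. Suppose that no $\wt{f^n(\gamma)}$ contains a pole of $f$. Then $f$ is holomorphic on each $\wt{f^n(\gamma)}$, so $f^{n+1}$ is well-defined and holomorphic on $\wt{f^n(\gamma)}$, and by the maximum principle $f^{n+1}(\wt{f^n(\gamma)}) = \wt{f^{n+1}(\gamma)}$; in particular the iterates $f^n$ form a well-defined sequence of holomorphic maps on $\wt{\gamma}$, and $f^n(\wt{\gamma}) = \wt{f^n(\gamma)}$ for all $n$. Since $\gamma \subset U \subset F(f)$, the curve $f^n(\gamma)$ lies in the Fatou component $U_n$ containing $f^n(U)$, so $\wt{f^n(\gamma)} \subset \wt{U_n}$. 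The hypothesis that $\wt{\gamma}$ meets $J(f)$ then forces $U$ (hence each $U_n$) to be {\mconn}, with $\wt{\gamma}$ containing at least one {\bcc} $K$ of $U$ whose closure meets $J(f)$; write $K_n$ for the component of $\wt{f^n(\gamma)}\setminus f^n(\gamma)$ into which $f^n$ maps $K$.

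The heart of the argument is to show these filled curves cannot stay bounded away from a neighbourhood of $0$, which will let us invoke Lemma~\ref{largecurve}. First I would observe that since $f^n$ maps $\wt{\gamma}$ properly (as a branched cover) onto $\wt{f^n(\gamma)}$ and $f$ is holomorphic there, a normal-families / Schwarz-type argument shows that either $\operatorname{diam} f^n(\gamma) \to \infty$ or the whole orbit $\{\wt{f^n(\gamma)}\}$ stays in a fixed bounded region. In the second case the family $\{f^n|_{\wt\gamma}\}$ is normal, and since $\wt\gamma$ meets $J(f)$ this is impossible by Montel's theorem unless the orbit closure meets a pole of $f$ or an omitted point --- but $f\in M$ guarantees at least one pole that is \emph{not} omitted, and more care is needed to rule out the orbit accumulating only at omitted values; here I expect to use that $J(f)$ has no isolated points and that the omitted values of $f$ form a set of at most two points, so a curve meeting $J(f)$ inside a bounded orbit would have to have part of $J(f)$ in its filled interior accumulating somewhere that violates normality. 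Thus $\operatorname{diam} f^n(\gamma)\to\infty$; combined with the fact that each $f^n(\gamma)$ lies in a single Fatou component $U_n$, a standard argument (using that $F(f)$ cannot contain arbitrarily large round annuli unless the relevant component surrounds $0$) shows that for large $n$ the curve $f^n(\gamma)$ surrounds $\{z : |z|\le r_0\}$, where $r_0$ is the constant from Lemma~\ref{largecurve}.

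At that point Lemma~\ref{largecurve} applies: $U_n$ is a {\bwd} for large $n$, hence $U$ is a {\bwd}, so the $\wt{U_n}$ (and with them the $\wt{f^n(\gamma)}$) escape to $\infty$ and are eventually {\mconn} domains surrounding $0$ whose closures lie in $F(f)$. But $\wt{f^n(\gamma)} \supset K_n$ and $\overline{K_n}$ meets $J(f)$ for every $n$ --- indeed $f^n$ maps the Julia-set points in $\overline{K} \subset \wt\gamma$ into $\overline{K_n}$, by complete invariance of $J(f)$. This says $\overline{f^n(\gamma)}$ meets $J(f)$ for all $n$, which contradicts the fact that, $U$ being a {\bwd}, the curves $f^n(\gamma)$ eventually lie in {\bcc}s-free parts of $U_n$ whose filled-in interiors are contained in $F(f)$ for $n$ large (this is part of the structure of a {\bwd}: for large $n$, $\wt{U_n}$ is a genuine topological disc lying in the Fatou set together with its complementary components up to the outer boundary --- more precisely, the bounded complementary components of $U_n$ shrink and are swallowed, so any Jordan curve in $U_n$ surrounding $0$ has its interior eventually meeting only points whose forward orbit escapes, i.e.\ Fatou points). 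The main obstacle I anticipate is precisely this last step --- making rigorous the claim that once $U$ is a {\bwd}, a Jordan curve $\gamma\subset U$ with $\wt\gamma\cap J(f)\ne\emptyset$ cannot persist, i.e.\ that the Julia-set ``core'' trapped inside $\wt\gamma$ must eventually be expelled or hit a pole --- together with the normality bookkeeping in the bounded-orbit case; both require careful use of the completely invariant structure of $J(f)$ and the geometry of {\bwd}s rather than a one-line estimate.
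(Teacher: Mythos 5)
Your proposal takes a completely different and far more elaborate route than the paper, and it has a genuine gap: the concluding step is not merely unproved but false.

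The paper's proof of Lemma~\ref{pole} is essentially one line. For $f\in M$, one has $J(f)=\overline{O^-(\infty)}$ by [BKL2, Lemma~1]. Since $\gamma\subset U\subset F(f)$, any point of $J(f)$ met by $\wt{\gamma}$ lies in the open interior of $\wt{\gamma}$, which therefore contains a prepole $z$ with $f^m(z)=\infty$ for some $m\ge1$. If no $\wt{f^n(\gamma)}$ contained a pole, then inductively $f^n$ would be analytic on $\wt{\gamma}$ and, by the open mapping/maximum principle, $f^n(\wt{\gamma})\subset\wt{f^n(\gamma)}$ for all $n$; but then $f^{m-1}(z)\in\wt{f^{m-1}(\gamma)}$ would be a pole, a contradiction. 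This is the role of the hypothesis $f\in M$: it delivers the $\overline{O^-(\infty)}$ description of $J(f)$. Your proposal never uses this characterisation, and instead mentions $f\in M$ only in a vague aside about omitted values.

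The decisive flaw in your argument is the final step, which you yourself flag as the main obstacle. After reducing (via Lemma~\ref{largecurve}) to the case where $U$ is a Baker wandering domain, you claim that for large $n$ the filled sets $\wt{f^n(\gamma)}$ ``lie in the Fatou set,'' because the bounded complementary components of $U_n$ ``shrink and are swallowed.'' This is not true: in a Baker wandering domain the $U_n$ are multiply connected with bounded complementary components that contain Julia set points (indeed $\partial U_n\subset J(f)$), and a curve in $U_n$ surrounding $0$ encloses a large swathe of $J(f)$. Theorem~\ref{BWD}(a) is precisely about the persistence of multiply connected images, not their disappearance; so the very phenomenon you invoke to close the argument contradicts the structure you are relying on. In addition, your middle step --- that either $\operatorname{diam}f^n(\gamma)\to\infty$ or the whole orbit is bounded, followed by a normality contradiction and then ``a standard argument'' that the curves surround $\{|z|\le r_0\}$ --- is not established; the diameters could oscillate, large diameter does not by itself yield a curve surrounding a fixed disc about $0$, and invoking Lemma~\ref{largecurve} forces the extra hypothesis $f\in M_F$, which the paper's Lemma~\ref{pole} deliberately avoids. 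I recommend replacing the whole strategy with the short $\overline{O^-(\infty)}$ argument above.
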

\begin{proof}
This follows from the fact that for $f\in M$ we have $J(f)=\overline{O^-(\infty)}$, by [\ref{BKL2}, Lemma~1], together with the fact that if $f^n$ is analytic on $\wt{\gamma}$, then $\partial f^n(\wt{\gamma})\subset f^n(\gamma)$.
\end{proof}
In the next lemma we use the classification of periodic components of $F(f)$ into five types: attracting basins, parabolic basins, Siegel discs, Herman rings and Baker domains; see~[\ref{wB93}, Theorem~6]. Here, and in the proof of Theorem~\ref{BWD}(b), we use ideas from [\ref{gS91}, Lemma~3.3].
\begin{lemma}\label{normal}
Let $f\in M\cap M_F$ and let $U$ be a component of $F(f)$. If there is a Jordan curve $\gamma$ in $U$ such that $\wt{f^n(\gamma)}$ contains a point of $J(f)$ for infinitely many $n$, then $U$ is either a Herman ring (or its pre-image) or a {\bwd}.
\end{lemma}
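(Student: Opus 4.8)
\textbf{Proof plan for Lemma~\ref{normal}.}
The plan is to argue by considering the orbit of the curve $\gamma$ and distinguishing two cases according to whether the quantities $\operatorname{diam} f^n(\gamma)$ (or the sizes of $\wt{f^n(\gamma)}$) stay bounded or not. Since $f\in M\cap M_F$, by Lemma~\ref{pole} there is some $m\ge 0$ with $\wt{f^m(\gamma)}$ containing a pole of $f$; replacing $\gamma$ by $f^{m+1}(\gamma)$ if necessary, we may assume that $\wt{\gamma}$ is large in the sense that it contains all of the finitely many poles of $f$, so that $f$ has no poles in $\hat{\C}\setminus\wt{\gamma}$. The point of this reduction is that $\wt{f^n(\gamma)}$ then behaves well: since $\wt{\gamma}$ contains every pole, $f$ is analytic on the unbounded complementary component of $\gamma$, and a standard argument (as in [\ref{gS91}, Lemma~3.3]) shows that $\wt{f^{n+1}(\gamma)}\supset f(\wt{f^n(\gamma)})$, and in particular that the sets $\wt{f^n(\gamma)}$ form a nested-type orbit once they are large enough.

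Next I would use Lemma~\ref{largecurve}: if for some $n$ the curve $f^n(\gamma)$ surrounds $\{z:|z|\le r_0\}$, then $U_n$ (the Fatou component containing $f^n(\gamma)$) is a \bwd, and then so is $U$, since $U$ is a wandering domain whose forward orbit eventually consists of multiply connected components surrounding $0$ and tending to $\infty$ --- this is exactly part (a) of Theorem~\ref{BWD} applied to the tail of the orbit, or can be seen directly from the definition of \bwd. So it remains to handle the case where $f^n(\gamma)$ never surrounds $\{z:|z|\le r_0\}$. In that case I claim the orbit $\{\wt{f^n(\gamma)}\}$ stays inside a fixed compact set $K\subset\C$: the sets $\wt{f^n(\gamma)}$ are nested-increasing (after the reduction above) yet each one fails to surround the disc of radius $r_0$, and combined with the hypothesis that $\wt{f^n(\gamma)}$ meets $J(f)$ infinitely often together with the complete invariance of $J(f)$ and the location of the poles, one deduces a uniform bound. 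Given boundedness, $\{f^n\}$ restricted to a neighbourhood of $\overline{U}$ has all iterates taking values in a bounded region, so $U$ cannot be a \bwd and cannot be a wandering domain of the "escaping" type; a normal-family / Montel argument on $U$ then forces $U$ to be (pre)periodic, and since $\wt{f^n(\gamma)}$ meets $J(f)$ for infinitely many $n$ while $\gamma\subset U_n$, the periodic component in the cycle must itself contain a Jordan curve whose filled version meets $J(f)$ --- among the five types of periodic components, this is possible only for a Herman ring (an attracting or parabolic basin or Siegel disc that is multiply connected would force it to be a \bwd, which is excluded for periodic components, and Baker domains of functions in $M_F$ can be ruled out similarly). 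Hence $U$ is a Herman ring or a preimage of one.

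The main obstacle I expect is the boundedness step: showing that if no $f^n(\gamma)$ surrounds $\{z:|z|\le r_0\}$ then the whole orbit $\wt{f^n(\gamma)}$ lies in a fixed compact set. The naive picture is that $\wt{f^n(\gamma)}$ grows under iteration and, being multiply connected "filled" sets not surrounding the small disc, they must spiral off to $\infty$ --- but the hypothesis that they repeatedly meet $J(f)$ pins them near the part of $J(f)$ that is accessible, and one has to combine this with the structure of $J(f)=\overline{O^-(\infty)}$ from Lemma~\ref{pole}'s proof and the finiteness of the pole set. Making this precise --- ruling out the possibility that the $\wt{f^n(\gamma)}$ escape along an unbounded channel while still meeting $J(f)$ infinitely often --- is where the real work lies, and it is presumably here that the ideas from [\ref{gS91}, Lemma~3.3] are used most essentially. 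Once boundedness is in hand, the reduction to the five-type classification and the elimination of all types except Herman rings is routine.
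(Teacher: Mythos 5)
Your proposal is structurally quite different from the paper's proof, and unfortunately it has several gaps that I do not think can be repaired.

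The most serious issue is your ``nested-type orbit'' claim $\wt{f^{n+1}(\gamma)}\supset f(\wt{f^n(\gamma)})$ after the reduction to $\wt{\gamma}$ containing all poles. Once $\wt{f^n(\gamma)}$ contains a pole, $f(\wt{f^n(\gamma)})$ is an unbounded set containing $\infty$, so it cannot be contained in the bounded set $\wt{f^{n+1}(\gamma)}$. (The standard fact $f(\wt{\gamma})\subset\wt{f(\gamma)}$, used in the proof of Lemma~\ref{pole}, holds when $f$ is \emph{analytic} on $\wt{\gamma}$, i.e.\ precisely when there are \emph{no} poles inside.) The preliminary reduction itself is also unjustified: Lemma~\ref{pole} produces some $m$ with one pole in $\wt{f^m(\gamma)}$, not all of them, and passing to $f^{m+1}(\gamma)$ does not give you what you want in general. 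Finally, even granting the boundedness step that you acknowledge is the hard point, the inference ``bounded orbit $\Rightarrow$ $U$ is (pre)periodic'' is false: wandering domains can have bounded (non-escaping, oscillating) orbits, and nothing in Montel's theorem upgrades boundedness to preperiodicity.

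The paper's argument is shorter and avoids all of this by arguing in the contrapositive using the classification of Fatou components. Assuming $U$ is not a Herman ring or its preimage (and noting the hypothesis rules out Siegel discs, since those are simply connected and lie in $F(f)$), every locally uniformly convergent subsequence of $(f^n)$ has \emph{constant} limit on $U$. Hence the spherical diameter of $\gamma_n = f^n(\gamma)$ tends to $0$ along such subsequences. By Lemma~\ref{pole} and the finiteness of the pole set ($f\in M_F$), infinitely many $\wt{\gamma_n}$ contain the same pole $p$. Since $\gamma_{n_k}$ shrinks spherically while keeping $p$ inside, the limit constant is either $\infty$ (so $\gamma_{n_k}\to\infty$ while surrounding $0$) or $p$ (so $\gamma_{n_k}\to p$ and hence $f(\gamma_{n_k})\to\infty$ while surrounding $0$). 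Either way Lemma~\ref{largecurve} yields that $U$ is a \bwd. In effect, the paper replaces your case split on whether $f^n(\gamma)$ surrounds $\{|z|\le r_0\}$ with the cleaner dichotomy ``limit is $\infty$ or limit is $p$,'' which follows directly from the normal-families facts about non-rotation components and for which both branches resolve to the \bwd{} conclusion; no boundedness lemma or nesting argument is needed. I would suggest reworking your proof along these lines.
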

\begin{proof}
Suppose that $U$ is not a Herman ring (nor its pre-image).  Clearly $U$ is not a Siegel disc (nor its pre-image). Therefore $U$ is a {\wand} or an immediate attracting or parabolic basin of $F(f)$, or a Baker domain of~$f$ (or a pre-image of one of these). Hence all locally uniformly convergent subsequences of $f^n$ have constant limit functions in~$U$; see~[\ref{BKL3}, Lemma~2.1] and~[\ref{wB93},~page 163]. Thus the spherical diameter of $\gamma_n=f^n(\gamma)$ tends to~0 along any such subsequence. Since $f\in M$ and $f\in M_F$, we deduce by Lemma~\ref{pole} that, for infinitely many $n$, $\wt{\gamma_n}$ contains the same pole of~$f$, say~ $p$. Thus there is a sequence $n_k$ such that $p\in\wt{\gamma_{n_k}}$ for all $k$ and $f^{n_k}$ tends to either~$\infty$ or~$p$, locally uniformly in $U$. 

In the first case, dist$(\gamma_{n_k},0)\to\infty$. Also, $p\in\wt{\gamma_{n_k}}$ and hence $0\in\wt{\gamma_{n_k}}$, for all large enough~$k$. Thus $U$ is a {\bwd} by Lemma~\ref{largecurve}. In the second case, dist$(\gamma_{n_k},p)\to 0$, so dist($f(\gamma_{n_k}),0)\to\infty$ and $0\in\wt{f(\gamma_{n_k}})$, for all large enough $k$. Thus $U$ is again a {\bwd} by Lemma~\ref{largecurve}.
\end{proof}

\begin{proof}[Proof of Theorem~\ref{BWD}(a)]
First, if $f$ is a {\tef}, then Theorem~\ref{BWD}(a) is well-known; see~[\ref{iB75}]. Next, suppose that $f$ is a {\tmf} with exactly one pole, which is an omitted value of $f$. Then $f$ cannot have a {\mconn} {\wand}~[\ref{iB87}, Theorem~1], so there is nothing to prove. Hence we can assume without loss of generality that $f\in M\cap M_F$.

It is obvious that if $U$ is a {\bwd}, then infinitely many $U_n$ are {\mconn}. We now prove the opposite implication by contradiction. Let $U$ be a {\wand} such that infinitely many of the components $U_n$ are {\mconn} and suppose that $U$ is not a {\bwd}. Since $U$ is a {\wand}, we deduce, by Lemma~\ref{normal}, that 
\begin{quote}
if $\gamma$ is a Jordan curve in $U_N$, where $N\ge 0$, then $\wt{f^n(\gamma)}$ contains a pole of $f$ for at most finitely many $n$.
\end{quote}
Choose $n_0$ such that $U_{n_0}$ is {\mconn}, and then take any Jordan curve $\gamma_0$ in $U_{n_0}$ such that $\wt{\gamma_0}$ meets $J(f)$. By Lemma~\ref{pole}, we can choose $m_0\ge 0$ such that $\wt{f^{m_0}(\gamma_0)}$ contains a pole of $f$.  If $\wt{f^{m_0+1}(\gamma_0)}$ meets $J(f)$, then we can apply Lemma~\ref{pole} again to find $m'_0>m_0$ such that $\wt{f^{m'_0}(\gamma_0)}$ contains a pole of $f$. Repeating this argument as often as necessary we deduce, by the above displayed statement, that we can redefine~$m_0$ to be a non-negative integer such that $\wt{f^{m_0}(\gamma_0)}$ contains a pole of $f$ and $\wt{f^{m_0+1}(\gamma_0)}$ does not meet $J(f)$.

Since infinitely many of the components $U_n$ are {\mconn}, we can now choose $n_1\ge n_0+m_0+1$ and take a Jordan curve $\gamma_1$ in $U_{n_1}$ such that $\wt{\gamma_1}$ meets $J(f)$. By the above reasoning, there exists $m_1\ge 0$ such that $\wt{f^{m_1}(\gamma_1)}$ contains a pole of~$f$ and $\wt{f^{m_1+1}(\gamma_1)}$ does not meet $J(f)$. Repeating this argument, we obtain sequences of non-negative integers $n_k,\;m_k$, and Jordan curves $\gamma_k$, such that, for $k\ge 0$,
\begin{equation}
n_{k+1}\ge n_k+m_k+1,
\end{equation}
\begin{equation}
\gamma_k\subset U_{n_k}\text{ and } \wt{\gamma_k} \text{ meets } J(f),
\end{equation}
\begin{equation}
f^{m_k}(\gamma_k)\subset U_{n_k+m_k}\text{ and } \wt{f^{m_k}(\gamma_k)} \text{ contains a pole of } f,
\end{equation}
\begin{equation}
f^{m_k+1}(\gamma_k)\subset U_{n_k+m_k+1}\text{ and } \wt{f^{m_k+1}(\gamma_k)} \text{ does not meet } J(f).
\end{equation}
Since $f\in M_F$, we can assume by~(2.3) and~(2.4) that $n_k$ and $m_k$ have been chosen such that, for some pole~$p$ of $f$, 
\begin{equation}
U_{n_k+m_k}\text{ contains a Jordan curve }\Gamma_k \text{ such that } p\in \wt{\Gamma_k},
\end{equation}
\begin{equation}
\wt{f(\Gamma_k)} \text{ does not meet } J(f).
\end{equation}
Since $U$ is a {\wand}, the components $U_n$ are disjoint. Thus, for $k\ge 0$, the Jordan curves $\Gamma_k$ are disjoint by~(2.1) and~(2.5), as are the image curves $f(\Gamma_k)$. Hence, for $0\le k<l<\infty$, we must have $\Gamma_k$ inside $\Gamma_l$, or vice versa. Since $f\in M_F$, there must exist integers $k_1$ and $k_2$, $0\le k_1<k_2<\infty$, such that $f$ has no poles in the closure of the ring domain $A$ lying between $\Gamma_{k_1}$ and $\Gamma_{k_2}$. Thus $f(A)$ is bounded and
\[\partial f(A)\subset f(\partial A)=f(\Gamma_{k_1})\cup f(\Gamma_{k_2}),\] 
so $f(A)$ is a subset of at least one of $\wt{f(\Gamma_{k_1})}$, $\wt{f(\Gamma_{k_2})}$. 
This contradicts~(2.6), however, because $A\cap J(f)\ne \emptyset$ (since $\Gamma_{k_1}$ and $\Gamma_{k_2}$ lie in different components of $F(f)$) so $f(A)\cap J(f)\ne \emptyset$. This completes the proof of Theorem~\ref{BWD}(a).

{\it Proof of Theorem~\ref{BWD}(b)}\quad Part~(b) now follows from part~(a) by a standard argument which we give for completeness.  Suppose that \begin{equation}\label{sing}
\text{sing\/}(f^{-1})\cap\bigcup_{n\ge1}U_n =\emptyset.
\end{equation} 
By part~(a), it is sufficient to prove that if $\gamma$ is any Jordan curve in $U$ which is not null-homotopic, then the image $\gamma_n=f^n(\gamma)$ is not null-homotopic in $U_n$, for $n\in \N$. But if $z_0\in \gamma$ and $\gamma_n\sim f^n(z_0)$ in $U_n$, for some $n\ge 1$, then the branch, $g$ say, of $f^{-n}$ such that $g(f^n(z_0))=z_0$ can be continued analytically (and univalently) to a simply-connected neighbourhood of $\gamma_n$ in $U_n$, by~(2.7). Then $g$ lifts the homotopy $\gamma_n\sim f^n(z_0)$ in $U_n$ to a homotopy $\gamma\sim z_0$ in $U$, which is a contradiction. This completes the proof of Theorem~\ref{BWD}(b).
\end{proof}

\section{Proofs of Theorems~2,~3,~4 and~5}
\setcounter{equation}{0}
Theorem~\ref{bddU} is a combination of the following two known results which together show that a meromorphic function $f$ maps bounded components of $F(f)$ in a nice way. An analytic function defined on a domain $U$ is called a {\em proper} map if $f$ has a topological degree; see [\ref{nS93}, pages~4--9] for a discussion of proper maps. 

\begin{lemma}\label{RiemannH}
Let $f$ be meromorphic and let $U$ be a bounded domain in which $f$ is analytic. 
\begin{itemize}
\item[(a)]
Then $f: U \to f(U)$ is proper if and only if $\hat{\partial} f(U)=f(\partial U)$ or, equivalently, if and only if pre-images of relatively compact subsets of $f(U)$ are relatively compact subsets of $U$.
\item[(b)]
If $f: U \to f(U)$ is proper with degree $k$ and  there are $N$ critical points of $f$ in $U$ (counted according to multiplicity), then 
\[c(U)-2=k(c(f(U))-2)+N;\]
in particular, $c(U)\ge c(f(U))$.
\end{itemize}
\end{lemma}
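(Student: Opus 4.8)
The plan is to prove part~(a) by unwinding the topological definition of a proper map with the help of the open mapping theorem, and part~(b) by the Riemann--Hurwitz relation.

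For part~(a), first note that, since $f$ is meromorphic on $\C$, it is continuous as a map $\bar U\to\hat\C$; as $\bar U$ is compact this gives $f(\bar U)=\overline{f(U)}$, the closure being taken in $\hat\C$, and hence $\hat\partial f(U)=f(\bar U)\setminus f(U)=f(\partial U)\setminus f(U)$. In particular $\hat\partial f(U)\subset f(\partial U)$ always, with equality precisely when $f(\partial U)\cap f(U)=\emptyset$; here we use that $f$ is non-constant, hence open, on $U$, so every point of $f(U)$ is interior to $f(U)$. It therefore suffices to show that the three conditions ``$f$ is proper'', ``$f(\partial U)\cap f(U)=\emptyset$'', and ``pre-images of relatively compact subsets of $f(U)$ are relatively compact in $U$'' coincide. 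If $f(\partial U)\cap f(U)=\emptyset$ and $K\subset f(U)$ is compact, then $f^{-1}(K)\cap\bar U$ is a closed, hence compact, subset of $\bar U$ disjoint from $\partial U$, so it is a compact subset of $U$; this gives both properness and the relative compactness statement. Conversely, if $w=f(z_0)\in f(U)$ for some $z_0\in\partial U$, then, taking a small closed disc $\overline D(w,\rho)\subset f(U)$ and using continuity of $f$ at $z_0$, the set $f^{-1}(\overline D(w,\rho))\cap U$ accumulates at $z_0\in\partial U$ and so is not relatively compact in $U$; thus $f$ is not proper. That a proper analytic map has a well-defined topological degree $k$ is recalled in [\ref{nS93}].

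For part~(b), I would first observe that if $f(U)$ is finitely connected then so is $U$: near each end of $f(U)$ the restriction of $f$ to the relevant part of $U$ is a proper map of degree at most $k$ onto a punctured-disc-like neighbourhood, hence a covering of degree at most $k$, so at most $k$ ends of $U$ lie over each end of $f(U)$ and $\partial U$ has at most $k\,c(f(U))$ components. A proper analytic map $f\colon U\to V$ of degree $k$ between finitely connected plane domains is a branched covering, so the Riemann--Hurwitz relation gives $\chi(U)=k\,\chi(V)-N$, where $\chi$ denotes the Euler characteristic and $N$ counts the critical points of $f$ in $U$ with multiplicity. Since a $c$-connected plane domain $W$ satisfies $\chi(W)=2-c(W)$, substitution yields $c(U)-2=k(c(V)-2)+N$. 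For the final inequality: if $c(V)\ge2$ it is immediate from $k\ge1$ and $N\ge0$; if $c(V)=1$ the identity reads $c(U)=2-k+N$, and since $U$ is a domain with $U\ne\hat\C$ we have $c(U)\ge1$, forcing $N\ge k-1$ and hence $c(U)\ge1=c(V)$; the case $c(V)=\infty$ is trivial.

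I expect the main obstacle to be the justification of the Riemann--Hurwitz formula at this level of generality --- for proper holomorphic maps between (possibly non-smoothly bounded) finitely connected plane domains --- together with the preliminary remark that properness propagates finite connectivity from $V$ to $U$; both are standard facts about proper maps and branched coverings and can be cited, for instance from [\ref{nS93}]. By contrast, part~(a) is essentially a careful bookkeeping exercise combining the open mapping theorem with the definition of properness.
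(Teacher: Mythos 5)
The paper gives no argument of its own for this lemma: it simply records that part~(a) is \hbox{[\ref{nS93}, page~5, Theorem~1]} and part~(b) is the Riemann--Hurwitz formula, citing \hbox{[\ref{nS93}, page~7]} for the finitely connected case and \hbox{[\ref{aB99}, Lemma~4]} for the infinitely connected case. Your sketch unwinds exactly the standard arguments behind those citations --- for~(a), that $f(\overline U)=\overline{f(U)}$ by compactness of $\overline U$, so $\hat{\partial}f(U)=f(\partial U)\setminus f(U)$, and then the equivalence of properness, $f(\partial U)\cap f(U)=\emptyset$, and relative compactness of pre-images; for~(b), the Euler-characteristic form of Riemann--Hurwitz for branched covers combined with $\chi(W)=2-c(W)$ --- and you yourself propose citing [\ref{nS93}] for the deeper steps, so the approach is the same one the paper points to.

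The one place your reconstruction is least complete is precisely the part the paper delegates to Bolsch. You observe that finite connectivity of $f(U)$ propagates to $U$ (at most $k$ ends of $U$ over each end of $f(U)$), which by contraposition gives $c(U)=\infty\Rightarrow c(f(U))=\infty$; but for the identity to hold with both sides infinite you also need the reverse implication, that $c(f(U))=\infty$ forces $c(U)=\infty$, and you dismiss that case as ``trivial'' without argument. In fact the observation you did prove is the one you need: if $c(f(U))=\infty$ but $c(U)<\infty$, then $f(U)=V$ is finitely connected by your preliminary step applied the other way round? --- no, it is not symmetric, so one does need a separate argument (or the version of Riemann--Hurwitz in [\ref{aB99}, Lemma~4], which is stated for the infinitely connected case and proved there). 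So the gap is small and is exactly what the paper's citation to Bolsch covers, but it should be flagged rather than waved through.
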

Lemma~\ref{RiemannH}(a) is proved in [\ref{nS93}, page~5, Theorem~1] and Lemma~\ref{RiemannH}(b) is the Riemann--Hurwitz formula; see [\ref{nS93}, page~7] for the case of finite connectivity and [\ref{aB99}, Lemma~4] for the case of infinite connectivity,  in an even more general context.

\begin{lemma}\label{Herring}
Let $f$ be meromorphic and let $f:U\to V$, where $U$ and $V$ are components of $F(f)$.
\begin{itemize}
\item[(a)]
Then $|V\setminus f(U)|\le 2$ and for any $w_0\in V\setminus f(U)$ there exists a path $\Gamma\subset U$ such that $f(z)\to w_0$ as $z\to\infty,\, z\in\Gamma$.
\item[(b)]
If $U$ is also bounded, then $f(U)=V$ and $f(\partial U)=\hat{\partial} V$.
\end{itemize}
\end{lemma}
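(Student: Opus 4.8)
The plan is to deduce both parts from standard mapping principles. Part~(a) is essentially a theorem of Herring and is where the real work lies; part~(b) will follow quickly from it together with Lemma~\ref{RiemannH}(a). Throughout we use the standard facts, recalled in the introduction, that $F(f)\subset\C$ is completely invariant and that $f$ maps $J(f)$ into $J(f)$ wherever it is defined; note also that since $f(U)\subset V\subset F(f)\subset\C$ we have $\infty\notin V$, so any $w_0\in V\setminus f(U)$ satisfies $w_0\neq\infty$, and $U$ contains no pole, so $f$ is holomorphic on $U$.

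For part~(a), fix $w_0\in V\setminus f(U)$ and choose $\rho>0$ with $\overline{B(w_0,\rho)}\subset V$. The idea is to analyse the components of $f^{-1}(B(w_0,\rho))\cap U$. Let $W$ be such a component. No point $\zeta\in\partial U$ can lie in $\overline W$: otherwise $f(\zeta)\in\overline{B(w_0,\rho)}\subset F(f)$ by continuity, whereas $\zeta\in\partial U\subset J(f)$ forces $f(\zeta)\in J(f)\cup\{\infty\}$. Hence $\hat{\partial} W\subset\{z\in U:f(z)\in\partial B(w_0,\rho)\}\cup\{\infty\}$. If moreover $\infty\notin\overline W$, then $W$ is relatively compact in $U$ with $f(\partial W)\subset\partial B(w_0,\rho)$, so $f\colon W\to B(w_0,\rho)$ is proper by Lemma~\ref{RiemannH}(a) and therefore onto $B(w_0,\rho)$, contradicting $w_0\notin f(U)$. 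Thus every component of $f^{-1}(B(w_0,\rho))\cap U$ is unbounded with $\infty$ in its closure; running the same argument with $\rho$ replaced by $r$ for all $0<r<\rho$ and letting $r\to0$ produces a path in $U$ along which $z\to\infty$ and $f(z)\to w_0$, so that $w_0$ is an asymptotic value realised inside $U$. The two remaining, genuinely delicate, points are the promotion of this nested family of unbounded ``tracts'' to an honest asymptotic path and, above all, the sharp bound $|V\setminus f(U)|\le 2$; for these we would follow, and cite, Herring's treatment (three distinct omitted values would force three disjoint unbounded tracts in $U$ whose mutual positions in $\hat{\C}$ are incompatible with $f$ being single-valued and meromorphic on all of $\C$).

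For part~(b), assume $U$ is bounded. Then $U$ contains no path tending to $\infty$, so part~(a) gives $V\setminus f(U)=\emptyset$, i.e. $f(U)=V$. By Lemma~\ref{RiemannH}(a) it now suffices to prove that $f\colon U\to f(U)$ is proper, i.e. that $f^{-1}(K)\cap U$ is a compact subset of $U$ for every compact $K\subset V$. Such a set is closed in $U$ and bounded (as $U$ is bounded), so if it were not compact there would be $z_n\in U$ with $f(z_n)\in K$ and $z_n\to\zeta\in\partial U\subset\C$. By continuity $f(z_n)\to f(\zeta)$, and since $K$ is closed, $f(\zeta)\in K\subset V\subset F(f)$; in particular $f(\zeta)\neq\infty$, so $\zeta$ is not a pole and hence $f(\zeta)\in J(f)$ because $\zeta\in\partial U\subset J(f)$ — a contradiction. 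Therefore $f$ is proper on $U$, and Lemma~\ref{RiemannH}(a) yields $f(\partial U)=\hat{\partial} V$. The main obstacle is thus entirely inside part~(a), specifically the asymptotic-path construction and the bound of $2$; once these are in hand, part~(b) is a short argument resting only on complete invariance of $J(f)$ and the properness criterion of Lemma~\ref{RiemannH}(a).
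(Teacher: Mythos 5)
Your proof is correct and follows essentially the same route as the paper, which simply cites Herring's theorems [\ref{mH98}] for part~(a) and the first assertion of~(b), and asserts the properness of $f$ on a bounded Fatou component as well known; you fill in that properness argument explicitly and sketch the mechanism behind Herring's result, but ultimately defer to the same reference for the sharp bound and the asymptotic-path construction. One small imprecision worth noting: when ruling out a bounded component $W$ of $f^{-1}(B(w_0,\rho))\cap U$, properness of $f\colon W\to f(W)$ by itself does not yield $f(W)=B(w_0,\rho)$; what actually does the work is that $\overline W\subset U$ gives $\hat{\partial} f(W)\subset f(\partial W)\subset\partial B(w_0,\rho)$, so the nonempty open set $f(W)$ is relatively closed in $B(w_0,\rho)$ and therefore equals it by connectedness — yielding $w_0\in f(U)$ and the desired contradiction without needing Lemma~\ref{RiemannH}(a) at all.
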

Lemma~\ref{Herring}(a) and the first assertion of Lemma~\ref{Herring}(b) are results of Herring [\ref{mH98}, Theorems~1 and~2]; see also [\ref{aB99}]. Also, if $U$ is a bounded Fatou component, then it is well-known that $f:U\to V$ is proper; that is, $f(\partial U)=\hat{\partial}f(U)=\hat{\partial} V$.

All parts of Theorem~\ref{bddU} follow immediately from Lemmas~\ref{RiemannH} and~\ref{Herring}.

\begin{proof}[Proof of Theorem~\ref{econnmf}]
The proof follows that of~[\ref{KS06}, Theorem~A]. Let~$f\in M_F$ and suppose that $U$ is a {\wand}. If $U$ is not a {\bwd}, then by Theorem~\ref{BWD}(a) all but a finite number of the components $U_n$ are {\sconn}, so the {\econn} of~$U$ is~1. If $U$ is a {\bwd} which is {\iconn}, then its {\econn} is~$\infty$ by Corollary~\ref{econn}(b). If $U$ is a {\bwd} which is {\fconn}, then the {\econn}, $c$ say, of $U$ exists, by Corollary~\ref{econn}(a), and $2\le c<\infty$. If $c>2$, then $f:U_n\to U_{n+1}$ is univalent, for large~$n$, by Lemma~\ref{RiemannH}(b). Moreover, for $n$ large enough $f$ maps the outer boundary of $U_n$ to the outer boundary of $U_{n+1}$; see [\ref{Pat2}, proof of Theorem~F] or [\ref{RS05}, Lemma~4]. Thus, since $f\in M_F$, we can use the argument principle to show that $f$ takes each value in $\C$ at most finitely often, and this is impossible by Picard's theorem. Hence $c=2$, as required.
\end{proof}

\begin{proof}[Proof of Theorem~\ref{component}] 
For the case when $U$ is of finite connectivity, see [\ref{nS93}, page~6], and also [\ref{cMwR91}] for the case when in addition $U=V$. 

Let $\alpha$ be any component of $\partial U$ which is mapped into but not onto a component $\beta$ of $\hat{\partial} V$. Choose a point $w_0\in \beta\setminus f(\alpha)$, possibly $w_0=\infty$. Since $U$ is bounded and $f$ is meromorphic, there exist only finitely many pre-images of $w_0$ in $\partial U$, say $z_k$, $k=1,\ldots,p$, none of which lies in~$\alpha$.

Let $V_n$, $n=1,2,\ldots$, be a smooth exhaustion of $V$; that is, the sets $V_n$ are smooth bounded domains such that $\overline{V_n}\subset V_{n+1}$, for $n=1,2,\ldots$ and $\bigcup V_n = V$. Then $\beta$ lies in a unique component of the complement of $V_n$, for each $n$, so there exists a unique component, $H_n$ say, of $V\setminus \overline{V_n}$ such that $\beta\subset \overline{H_n}$. Note that $\beta\subset \overline{H_{n+1}}\subset \overline{H_n}$, for $n=1,2,\ldots$, so $\bigcap \overline{H_n}$ is a connected subset of $\hat{\partial} V$ and hence $\bigcap \overline{H_n}=\beta$.

We now wish to choose, for each $n$, a component $G_n$ of $U\cap f^{-1}(H_n)$ such that $\alpha\subset \overline{G_n}$. In order to do this, we construct a path $\Gamma:\,\gamma(t),\; t\in[0,\infty)$, in $U$ which approaches $\alpha$ in the sense that ${\rm dist}_{\chi}(\gamma(t),\alpha)\to 0$ as $t\to\infty$ and $\alpha\subset\overline{\Gamma}$, where~$\chi$ denotes the spherical metric on $\hat{\C}$. Such a path $\Gamma$ can be constructed by using a smooth exhaustion $U_m$ of $U$ and choosing $\Gamma$ to lie eventually outside each $U_m$ and to accumulate at each point of a dense subset of $\alpha$. Then ${\rm dist}_{\chi}(f(\gamma(t)),\beta)\to 0$ as $t\to\infty$. Thus, for each $n=1,2,\dots$, we have $f(\gamma(t))\in H_n$ for $t$ large enough, so we can define $G_n$ to be the component of $U\cap f^{-1}(H_n)$ such that $\gamma(t)\in G_n$ for $t$ large enough. By the properties of $H_n$ and the fact that $\alpha\subset \overline{\Gamma}$, we have $\alpha\subset \overline{G_{n+1}}\subset \overline{G_n}$, for $n=1,2,\ldots$. Thus $\bigcap\overline{G_n}$ is connected, contains $\alpha$, and is a subset of $\partial U$ (because any point in $\bigcap\overline{G_n}$ must be mapped by $f$ to a point in $\beta$). Hence $\bigcap \overline{G_n}=\alpha$, so we can choose $n$ such that $\overline{G_n}\cap \bigcup_{k=1}^p \{z_k\}=\emptyset$. 

For such a choice of $n$, let $w_m$ be a sequence in $H_n$ which converges to $w_0$. Since $f:G_n\to H_n$ is proper, there exists a sequence $z_m$ in $G_n$ such that $f(z_m)=w_m$, for $m=1,2,\ldots$, and we may assume that $z_m\to z_0$, where $f(z_0)=w_0$. Then $z_0\in\overline{G_n}$, a contradiction to the above choice of~$n$.
\end{proof}

To prove Theorem~\ref{unbddU}, we need some ideas from the theory of cluster sets.
First, for an unbounded domain $U$, with $z_0\in \hat{\partial} U$, we define the cluster sets
\[C_U(f,z_0)= \{w_0\in \hat{\C}:\exists \;z_n\in U \text{ with }z_n\to z_0,\,f(z_n)\to w_0\}\] 
and 
\[C_{\partial U}(f,\infty)= \{w_0\in \hat{\C}:\exists \;z_n\in\partial U \text{ with }z_n\to \infty,\,f(z_n)\to w_0\},\]
where we assume that $\partial U$ is unbounded.

We shall use the following result, which is a special case of the Beurling--Kunugui theorem; see [\ref{kN60}, page~23, Theorem~7].
\begin{lemma}\label{BeurKun}
Let $f$ be meromorphic and let $U$ be an unbounded domain such that $\partial U$ is unbounded. Suppose that the set 
\[\Omega=C_U(f,\infty)\setminus C_{\partial U}(f,\infty)\]
is non-empty and $\Omega'$ is any component of $\Omega$. Then every value from $\Omega'$, with at most two exceptions, is assumed by $f$ infinitely often in $U\cap\{z:|z|>R\}$, for all $R>0$.
\end{lemma}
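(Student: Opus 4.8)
The plan is to deduce the lemma from the general Beurling--Kunugui boundary theorem~[\ref{kN60}, p.~23, Theorem~7] by moving the boundary point $\infty$ to a finite boundary point via a M\"obius inversion. Since $\partial U$ is an unbounded subset of $\C$ it is in particular nonempty; fix $a\in\partial U$ and set $\phi(z)=1/(z-a)$, a homeomorphism of $\hat{\C}$ with $\phi(\infty)=0$ and $\phi(a)=\infty$. Then $U^*:=\phi(U)$ is a domain in $\C$ with $0\notin U^*$ and $0\in\partial U^*$, and $g:=f\circ\phi^{-1}$ is meromorphic on $U^*$; the hypothesis that $\partial U$ is unbounded translates exactly into the statement that $\partial U^*$ accumulates at $0$. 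Since $\phi$ carries $U$ onto $U^*$ and $\hat{\partial} U=\partial U\cup\{\infty\}$ onto $\hat{\partial} U^*=\partial U^*\cup\{0\}$, it follows directly from the definitions that
\[C_U(f,\infty)=C_{U^*}(g,0)\quad\text{and}\quad C_{\partial U}(f,\infty)=C_{\partial U^*}(g,0),\]
so $\Omega$ is identified with $\Omega^*:=C_{U^*}(g,0)\setminus C_{\partial U^*}(g,0)$, with corresponding components. Moreover $\phi$ carries the sets $U\cap\{z:|z|>R\}$, $R>0$, cofinally onto the punctured neighbourhoods $U^*\cap\{0<|\zeta|<\delta\}$, $\delta>0$, of $0$ in $U^*$, so ``assumed infinitely often in $U\cap\{z:|z|>R\}$ for every $R>0$'' is equivalent to ``assumed infinitely often in every punctured neighbourhood of $0$ within $U^*$''.

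It then remains to invoke the cited theorem in its reduced form: if $g$ is meromorphic on a domain $U^*$ with $\partial U^*$ accumulating at a boundary point $0$, then $\Omega^*$ is open and, on each of its components, $g$ assumes every value --- with at most two exceptions --- infinitely often in every punctured neighbourhood of $0$ within $U^*$. For completeness I would recall the two ingredients behind this. First, the openness of $\Omega^*$: since $C_{\partial U^*}(g,0)$ is closed, a value $w_0\in\Omega^*$ has a neighbourhood $B$ with $\overline{B}\cap C_{\partial U^*}(g,0)=\emptyset$, so $g$ avoids $B$ on $\partial U^*$ near $0$ while coming arbitrarily close to $w_0$ near $0$; an open-mapping / argument-principle argument on the parts of $U^*$ close to $0$ then shows that $g$ in fact covers a whole sub-neighbourhood of $w_0$ infinitely often near $0$, and this sub-neighbourhood lies in $C_{U^*}(g,0)\setminus C_{\partial U^*}(g,0)$. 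Second, the ``at most two exceptions'' clause, which is the Picard-type heart of the result: were $g$ to omit three distinct values of a component $\Omega'$ on some punctured neighbourhood $W$ of $0$ in $U^*$, then $g|_W$ lifted to the universal cover of $W$ would take values in the thrice-punctured sphere and hence be a normal map there, and a Lindel\"{o}f-type boundary argument would collapse $C_W(g,0)\cap\Omega'$ to a single asymptotic value, contradicting the fact that some $w_0\in\Omega'$ lies in $C_{U^*}(g,0)$ but not in $C_{\partial U^*}(g,0)$.

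The step I expect to require the most care is matching the exact hypotheses under which~[\ref{kN60}, Theorem~7] is formulated to the domain $U^*$ at the possibly very irregular boundary point $0$: one must check that no local regularity of $\partial U^*$ near $0$ is needed --- neither local connectedness nor accessibility of $0$ --- only that $\partial U^*$ accumulate at $0$, which holds here precisely because $\partial U$ is unbounded. If the reference states the theorem only for the unit disc with a boundary point on the unit circle, the remedy is to first pass to a crosscut component of $U^*$ touching $0$ and uniformise it onto the disc; the cluster-set hypotheses and conclusions are unaffected, since they depend only on the behaviour of $g$ on the part of $U^*$ arbitrarily close to $0$, and normality and the Lindel\"{o}f theorems used above are conformally invariant. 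Granting this matching of hypotheses, the reduction and the appeal to the theorem are routine.
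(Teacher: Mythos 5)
Your proposal is correct and follows essentially the same route as the paper: the paper offers no proof of this lemma beyond citing it as a special case of the Beurling--Kunugui theorem in Noshiro's \emph{Cluster sets} (page 23, Theorem 7), adding only the remark that, since $f$ here is meromorphic on all of $\C$ and hence continuous on $\partial U$, the boundary cluster set in the general theorem (defined via $C_U(f,z)$ for $z\in\partial U$) reduces to the simpler set $C_{\partial U}(f,\infty)$ used in the statement. Your M\"obius reduction of the boundary point $\infty$ to a finite point and the matching of cluster sets and punctured neighbourhoods is a harmless elaboration of that same citation-based argument.
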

The set $\Omega$ defined in Lemma~\ref{BeurKun} is open (see [\ref{kN60}, page~17, Theorem~4]) and hence $\Omega$ has at most countably many such components $\Omega'$. In particular, in Lemma~\ref{BeurKun} the set $\Omega\setminus f(U)$ is at most countable.

In the general Beurling--Kunugui theorem, the function $f$ is assumed to be meromorphic only in $U$, so $f$ need not have a continuous extension to $\partial U$ (as is the case here), and the cluster set $C_{\partial U}(f,\infty)$ is defined in terms of the values of $C_U(f,z)$, for $z\in \partial U$.

\begin{proof}[Proof of Theorem~\ref{unbddU}]
Let $f$ be a {\tmf} and let $U$ be an unbounded component of $F(f)$. Then $\partial U$ is unbounded, since $J(f)$ is unbounded, so Lemma~\ref{BeurKun} can be applied. It is a straightforward matter to check that
\begin{equation}
\hat{\partial} f(U)=f(\partial U)\cup \left(C_U(f,\infty)\setminus f(U)\right).
\end{equation}
Thus, by Lemma~\ref{Herring}(a),
\begin{equation}\label{boundary}
\hat{\partial} V=f(\partial U)\cup \left(C_U(f,\infty)\setminus (f(U)\cup E)\right),
\end{equation}
where $V$ is the component of $F(f)$ such that $f(U)\subset V$ and $E=V\setminus f(U)$, $|E|\le 2$. Note that $f(\partial U)\cap E=\emptyset$, since there are no isolated points of $\partial U$. Since $f(\partial U)\subset \hat{\partial} V$, we deduce that 
\[\overline{f(\partial U)}\subset \hat{\partial} V.\]
To prove the desired statement that $\overline{f(\partial U)}= \hat{\partial} V$, we suppose that there exists  $w_0\in \hat{\partial} V\setminus\overline{f(\partial U)}$. Then there is an open disc $\Delta$ in $\hat{\C}$ with centre $w_0$ such that 
$\Delta\cap \overline{f(\partial U)}=\emptyset$. Since $\hat{\partial} V$ is perfect, as can easily be checked by using the fact that $J(f)$ is perfect, the disc $\Delta$ contains uncountably many points $w$ such that $w\in \hat{\partial} V\setminus\overline{f(\partial U)}$. Therefore, by~(\ref{boundary}), the set 
\[\hat{\partial} V\setminus\overline{f(\partial U)}=C_U(f,\infty)\setminus \left(f(U)\cup E\cup\overline{f(\partial U)}\right)\]
is uncountable. Since $|E|\le 2$ and $C_{\partial U}(f,\infty)\subset \overline{f(\partial U)}$, the set \[C_U(f,\infty)\setminus \left(f(U)\cup C_{\partial U}(f,\infty)\right)=\Omega\setminus f(U)\]
is also uncountable, which contradicts the statement following Lemma~\ref{BeurKun}. This completes the proof of Theorem~\ref{unbddU}(a).

The proof of part~(b) is clear since $\hat{\partial}V=\overline{f(\partial U)}$, by part~(a), and $f(\partial U)$ can have at most $N$ components. 

To prove part~(c), we suppose that $c(V)>c(U)$. Then $U$ must have a finite number of bounded boundary components, $\alpha_1,\ldots,\alpha_m$ say, and there must exist at least one bounded boundary component, $\beta_0$ say, of $V$ which does not contain any of $f(\alpha_1),\ldots,f(\alpha_m)$. Let $\beta_1, \ldots,\beta_n$ denote those bounded boundary components of $V$ which contain at least one of the sets $f(\alpha_1),\ldots,f(\alpha_m)$; clearly $n\le m$.

Now suppose that $\beta_0$ is not the outer boundary of $V$. Let $\Gamma$ be a Jordan curve in $V$ which separates $\beta_0$ from $\beta_1\cup\cdots\cup \beta_n$, such that $\beta_0$ lies in the bounded complementary component, $G$ say, of $\Gamma$. This is possible by repeated applications of the result~[\ref{New}, page 143, Theorem~3.3] to the closed set $\hat{\C}\setminus V$. By part~(a), we have $f(\partial U)\cap G\ne \emptyset$. However, $f(\partial U)\cap \Gamma=\emptyset$, since $f(\partial U)\subset J(f)$. Thus if we choose $z_0\in\partial U$ such that $f(z_0)\in G$, then the component $E_0$ of $\partial U$ which contains $z_0$ is unbounded but its image lies entirely inside $\Gamma$ and so is bounded, as required.

In the case when $\beta_0$ is the outer boundary of $V$ (which can only occur when $V$ is bounded), a similar argument applies, except that in this case $\beta_0$ lies in the unbounded complementary component of $\Gamma$ and the image of $E_0$ is bounded because it lies in $\overline{V}$. This completes the proof of Theorem~\ref{unbddU}.
\end{proof}

\section{Examples}
\setcounter{equation}{0}
\setcounter{example}{0}
Our first example shows that Theorem~1(a) is false without the hypothesis that $f\in M_F$.
\begin{example}
There exists a {\mf} $f$ with infinitely many poles and a {\wand} $U$ such that each component $U_n$, $n=0,1,2,\ldots\,$, is bounded and {\iconn}, but $U$ is not a {\bwd}.
\end{example}
\begin{proof}
The construction of Example~1 is based on the entire function
\[
h(z)=2+2z-2e^z,
\]
which is derived from Bergweiler's example $z\mapsto 2-\ln 2+2z-e^z$ in [\ref{wB95}] by shifting the super-attracting fixed point from $\ln 2$ to 0. Here we consider the closely related meromorphic function
\[
f(z)=2+2z-2e^z+\frac{\eps}{e^z-e^a},
\]
where $a$ and $\eps$ are positive constants to be chosen suitably small. Note that \[\phi(z)=f(z)-2z\] 
is $2\pi i$-periodic.

First we claim that if $0<a<1/32$ and $0<\eps\le a^2/16$, then the set
\[\Delta_a=\{z:|z|\le 2a, |z-a|\ge a/2\}\]
is mapped by $f$ into $\{z:|z|<a/2\}\subset \Delta_a$. For $|z|\le 1$ we have
\begin{equation}
|2+2z-2e^z|=|z^2+z^3/3+\cdots|\le |z|^2(1+|z|/3+|z|^2/3^2+\cdots)< 2|z|^2.
\end{equation}
Similarly, $|e^z-1|\ge\frac12|z|$, for $|z|\le\frac12$, so
\begin{equation}
\left|\frac{\eps}{e^z-e^a}\right|=\frac{\eps}{e^a|e^{z-a}-1|}\le \frac{4\eps}{a}\le\frac a4\,,
\qfor a/2\le|z-a|\le1/2.
\end{equation}
The estimates~(4.1) and~(4.2) give
\[|f(z)|<8a^2+\frac a4<\frac a2\,,\qfor z\in\Delta_a,\]
since $0<a<1/32$. Therefore $f(\Delta_a)\subset \{z:|z|<a/2\}\subset \Delta_a$, as required.

Thus $f$ has a fixed point, $z_0$ say, in the interior of $\Delta_a$, which must be attracting. The corresponding immediate attracting basin $U_0$ of $f$ contains $\Delta_a$ but not the point $a$, where $f$ has a pole, so $U_0$ is multiply connected. Hence $U_0$ must be infinitely connected by [\ref{BKL3}, Theorem~3.1].

It is shown in [\ref{K98}, proof of Theorem~4] that the immediate super-attracting basin of $h$ which contains the super-attracting fixed point~$0$ is bounded. This is done by specifying a Jordan curve $\Gamma$ which winds round~0 (and is contained in $\{z:|\Im(z)|<\pi\}$), such that $h(\Gamma)$ lies in the unbounded component of the complement of $\Gamma$. This property remains true for $f(\Gamma)$ as long as we choose $\eps$ small enough and hence $U_0$ is bounded.

Since $f(z)=2z+\phi(z)$, where $\phi$ is $2\pi i$-periodic, the set $J(f)$ is $2\pi i$-periodic; see [\ref{RS00}, Corollary~1], for example. Thus, for each $n\in\Z$, the set $U_n=U_0+2n\pi i$ is a bounded {\iconn} component of $F(f)$. Now, for $n\in \Z$, we have 
\[2n\pi i\in\Delta_a+2n\pi i\subset U_n,\quad f(2n\pi i)=4n\pi i+\frac{\eps}{1-e^a}\quad\text{and}\quad
\left|\frac{\eps}{1-e^a}\right|\le\frac{a^2/16}{a}<\frac a2,\]
so $f(U_n)\subset U_{2n},$ for $n\in \Z$. Thus $U_1$ is a bounded infinitely connected {\wand} which is not a {\bwd}, as required.
\end{proof}
Note that in this example the Fatou components which contain $f^n(U_1)$ are all {\iconn}, as expected by Corollary~\ref{econn}(b).

A similar construction to Example~1 can be carried out starting with 
\[h(z)=z-1+e^{-z}+2\pi i.\]
The function $z\mapsto z-1+e^{-z}$ has congruent super-attracting basins containing the super-attracting fixed points $2n\pi i,\,n\in\Z$, and it was shown by Herman that these components form an orbit of wandering domains of $h$; see [\ref{mH84}]. In this case, the construction in Example~1 gives a {\mf} with an orbit of unbounded infinitely connected {\wand}s. We omit the details.

Our next example shows that there does exist a {\mf} with a {\mconn} {\wand} $U$ such that $U_n$ is {\sconn} for $n\ge 1$.
\begin{example}
There exists a function $f\in M_F$ with a bounded doubly connected {\wand} $U$ such that each component $U_n,\, n=1,2,\ldots\,$, is bounded  and {\sconn}.
\end{example}
\begin{proof}
The construction of Example~2 is based on the entire function
\[
g(z)=z+\lambda \sin(z+a),
\]
where $\lambda>0$ and $a\in\R$ are chosen so that $g(2n\pi)=(2n+2)\pi,\,n\in\Z$, and $g$ has critical points at each $2n\pi,\,n\in\Z$. Thus 
\begin{equation}\lambda\sin a =2\pi,\quad 1+\lambda\cos a=0,
\end{equation} 
so $a=\pi-\tan^{-1}(2\pi)=1.728\ldots\,$ and $\lambda=\sqrt{1+4\pi^2}=6.362\ldots\,$. Devaney showed in [\ref{rD89}] that $g$ has a wandering domain containing~0. Here we consider the closely related function
\[f(z)=g(z)+\frac{\eps}{z}=z+\frac{\eps}{z}+\lambda \sin(z+a),\]
where $\eps$ is a positive constant to be chosen suitably small. In particular, we require that $0<\eps<1/2$, which implies by a calculation that 
\[f(\pi/2-a)=\pi/2-a+\frac{\eps}{\pi/2-a}+\lambda>0,\]
so $f$ has a zero in the interval $(\pi/2-a,0)$. Thus $f\in M$, since $0$ is a pole of $f$.

We write $B(z,r)=\{w:|w-z|<r\},\,r>0$. Since $g$ has critical points at $2n\pi,\,n\in\Z$, and $g(z+2\pi)=g(z)+2\pi$, we can choose a constant $r_1$ such that $0<r_1<1/2$ and
\begin{equation}
|g'(z)|\le\frac14,\qfor |z-2n\pi|\le r_1,\, n\in\Z.
\end{equation}
Hence
\[g(B(2n\pi,r))\subset B((2n+2)\pi,r/4),\quad\text{for }0<r\le r_1,\,n\in\Z.\] 
(See~(4.8) for a more precise estimate of the behaviour of $g$ near~0.) Therefore, we can choose $\eps>0$ and $r_2,\,0<r_2<r_1$, such that $6\sqrt{\eps}<r_1$ and 
\begin{equation}
\overline{f(B(2n\pi,r_1))}\subset B((2n+2)\pi,r_2),\qfor n\ge1.
\end{equation}
In particular, note that $0<\eps<(r_1/6)^2<1/144$.

Now let
\[\Delta_0=\{z:\sqrt{\eps}/2<|z|<2\sqrt{\eps}\}\quad\text{and}\quad \Delta_n=B(2n\pi, r_1),\;n\ge 1.\]
The function $z\mapsto z+\eps/z$ is a Joukowski function which maps $\Delta_0$  
in a 2-to-1 manner onto an ellipse contained in $B(0,3\sqrt{\eps}\,)$. Also, by~(4.4) with $n=0$, we have
\begin{align}|\lambda\sin(z+a)-2\pi|&=|g(z)-2\pi-z|\notag\\ 
&\le|g(z)-2\pi|+|z|\notag\\ 
&\le\tfrac12 \sqrt{\eps}+2\sqrt{\eps}<3\sqrt{\eps},
\quad\text{for }z\in \Delta_0.\notag
\end{align}
Hence
\begin{equation}
f(\Delta_0)\subset B(2\pi,3\sqrt{\eps}+3\sqrt{\eps}\,)\subset B(2\pi,r_1).
\end{equation}
Therefore, by~(4.5) and (4.6),
\begin{equation}
f^n(\Delta_m)\subset \Delta_{m+n},\qfor m,n\ge 0,
\end{equation}
so
\[\Delta_0\cup\Delta_1\cup \Delta_2\cup\cdots\subset F(f),\]
by Montel's theorem. For $n\ge 0$, let $U_n$ be the component of $F(f)$ which contains $\Delta_n$. Clearly $U_0$ is multiply connected, since $0\in J(f)$, and $f^n\to\infty$ locally uniformly in each $U_n,\, n\ge 0$, by~(4.7). Hence $U_0$ is not a Herman ring (nor its pre-image). Also note that $J(f)$ is symmetric with respect to the real axis and each interval of the form $[(2n+1)\pi,(2n+2)\pi],\,n\ge 0$, contains a repelling fixed point of $f$, since $0<\eps<1/144$.

We now show that the components $U_n,\,n\ge 0$, are all different. Suppose, for a contradiction, that $U_p=U_q$, where $0\le p<q$. Then there is a Jordan curve $\gamma$ in $U_p$, which is symmetric with respect to the real axis and passes through $\Delta_p$ and $\Delta_q$. Hence $f^n(\gamma),\,n\ge0$, is a closed curve in $F(f)$, symmetric with respect to the real axis, which passes through $\Delta_{p+n}$ and $\Delta_{q+n}$. It follows that, for $n\ge 0$, the set $\wt{f^n(\gamma)}$ contains the repelling fixed point of $f$ located in the interval $[(2(p+n)+1)\pi,(2(p+n)+2)\pi]$. Thus $U_0$ is a {\bwd}, by Lemma~\ref{normal}. Therefore 
\[\frac{\ln \ln |f^n(z)|}{n}\to \infty,\quad\text{for } z\in U_0,\]
by [\ref{RS05}, Theorem~1(d)], and this contradicts the fact that $f^n(\Delta_0)\subset \Delta_n$, for $n\ge 0$. Hence the components $U_n$ are indeed different and so $U_0$ is a {\wand} but not a {\bwd}. 

We now show that the components $U_n$ are all bounded. For $n\ge 0$, put 
\[C_n=\{z:|z-2n\pi|=0.5\}\quad\text{and}\quad C'_n=\{z:|z-2n\pi|=0.6\}.\]

\begin{lemma}\label{imagecircle}
We can choose $\eps>0$ so small that, for $n\ge 0$, we have
\begin{itemize}
\item[(a)]
$f(C_n)$ winds twice positively round $C'_{n+1};$
\item[(b)]
$f'(C_n)$ winds once positively round~$\{z:|z|=1\}$;
\item[(c)]
$U_n$ lies inside $C_n$.
\end{itemize}
\end{lemma}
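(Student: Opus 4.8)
The plan is to prove the three parts essentially in the order (b), (a), (c), since the second part is the analytic heart and the first follows from it by the argument principle, while the third is then a short topological consequence. Throughout we exploit the quasi-periodicity $g(z+2\pi)=g(z)+2\pi$, so that it suffices to establish everything for a single value of $n$, say $n\ge 1$ (the case $n=0$ being handled separately because of the pole at $0$). First I would observe that $f'(z)=1-\eps/z^2+\lambda\cos(z+a)$, and that at the critical point $2n\pi$ of $g$ we have $g'(2n\pi)=1+\lambda\cos(2n\pi+a)=0$ by~(4.3), so on the circle $C_n$ the term $1+\lambda\cos(z+a)$ is small, of size $O(r_1)$ by~(4.4) after differentiating, while $-\eps/z^2$ is of size $O(\eps)$. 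Thus $f'$ restricted to $C_n$ is a small perturbation of the linear map $z\mapsto g''(2n\pi)(z-2n\pi)=-\lambda\sin(2n\pi+a)(z-2n\pi)$, which traverses a circle of radius $\tfrac12\lambda|\sin a|=\tfrac12\cdot 2\pi=\pi>1$ once around the origin; choosing $\eps$ and $r_1$ small enough, Rouch\'e's theorem (or a direct winding-number estimate) gives part~(b).

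For part~(a), I would similarly write $f(z)=g(z)+\eps/z$, and on $C_n$ expand $g$ about its critical point: $g(z)-(2n+2)\pi = \tfrac12 g''(2n\pi)(z-2n\pi)^2 + O(|z-2n\pi|^3)$, so that $f(z)-(2n+2)\pi$ is, on $C_n$, a perturbation of $w\mapsto \tfrac12 g''(2n\pi)w^2$ with $w=z-2n\pi$ of modulus $0.5$. The quadratic term has modulus $\tfrac12\cdot 2\pi\cdot (0.5)^2 = \pi/4 \approx 0.785$, which comfortably exceeds the target radius $0.6$ of $C'_{n+1}$, while the cubic error, the linear term (whose coefficient $g'(2n\pi)=0$), and the $\eps/z$ contribution are all $O(r_1^3)+O(\eps)$ and hence negligible after shrinking $\eps$; a winding-number computation then shows $f(C_n)$ encircles $C'_{n+1}$ exactly twice, positively. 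Part~(c) follows from parts~(a) and~(b) by a standard argument: by~(4.8), $\Delta_n=B(2n\pi,r_1)$ lies inside $C_n$, and part~(a) shows $f$ maps the disc bounded by $C_n$ properly of degree $2$ into the disc bounded by $C'_{n+1}$; consequently the full orbit of this disc stays within $\bigcup_m \{|z-2m\pi|\le 0.6\}$, so the disc bounded by $C_n$ lies in $F(f)$ and hence (being connected and meeting $\Delta_n$) inside $U_n$, while $C_n$ itself, carrying the repelling fixed point in $[(2n-1)\pi,2n\pi]$ only in the $n\ge 1$ \emph{adjacent} interval sense --- more precisely $C_n$ meets $J(f)$ because the repelling fixed point in $[(2n+1)\pi,(2n+2)\pi]$ is separated from $2n\pi$ --- so $U_n$ cannot extend beyond $C_n$. (For $n=0$ one argues the same way using that $\Delta_0$ is an annulus and $f|_{\Delta_0}$ is $2$-to-$1$ onto a region inside $C_1$, and that $C_0$ surrounds the pole $0\in J(f)$.)

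The main obstacle I anticipate is \emph{bookkeeping the uniformity in $n$} simultaneously with the smallness of $\eps$: the error terms in the expansions of $g$ and $g'$ near $2n\pi$ are uniformly bounded in $n$ precisely because of the periodicity~(4.4), but one must be careful that the single choice of $\eps$ made here is compatible with the (finitely many) earlier constraints $0<\eps<(r_1/6)^2$, $6\sqrt{\eps}<r_1$, and the bounds~(4.5)--(4.6) entering the definition of the $\Delta_n$; the cleanest route is to first fix $r_1$ small enough that the purely $g$-based winding estimates hold with room to spare on every $C_n$, and only then shrink $\eps$. A secondary subtlety is establishing that $C_n$ genuinely meets $J(f)$ (needed for~(c)), which I would do via the repelling fixed point located in $[(2n+1)\pi,(2n+2)\pi]$ together with the real-axis symmetry of $J(f)$ and a separation estimate showing that fixed point lies at distance $>0.5$ from $2n\pi$.
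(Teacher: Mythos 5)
Your proofs of parts~(a) and~(b) follow essentially the same route as the paper: Taylor-expand $g$ about its critical point $2n\pi$, note $g'(2n\pi)=0$ and $|g''(2n\pi)|=2\pi$, let the quadratic term dominate on $|z-2n\pi|=0.5$, and absorb the cubic error and the $\eps/z$ perturbation after shrinking $\eps$; the argument principle/Rouch\'e then gives the winding numbers. (The paper writes the expansion explicitly as~(4.8)--(4.9), but the content is the same, and proving~(b) before~(a) is cosmetic.) Your concluding paragraph on tuning $r_1$ first and $\eps$ second is exactly the right bookkeeping.

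The argument for part~(c), however, has a real gap, and it begins with a false claim. You assert that part~(a) shows $f$ maps the disc bounded by $C_n$ properly of degree~2 \emph{into} the disc bounded by $C'_{n+1}$. This is backwards: on $C_n$ one has, from your own estimate, $|g(z)-2(n+1)\pi|\approx \pi/4\cdot(1\pm0.1)\in(0.7,0.87)$, so $f(C_n)$ lies \emph{outside} $C'_{n+1}$ (which has radius $0.6$). Consequently, by the argument principle, $f(\mathrm{int}\,C_n)$ \emph{contains} the closed disc of radius $0.6$ about $2(n+1)\pi$, and the image actually reaches out to radius roughly $0.87$; it is certainly not contained in $\{|w-2(n+1)\pi|\le 0.6\}$. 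So the orbit of $\mathrm{int}\,C_n$ does not stay inside $\bigcup_m\{|z-2m\pi|\le 0.6\}$, and the intended Montel argument does not launch. The conclusion is also visibly wrong at $n=0$: the pole $0$ lies in $\mathrm{int}\,C_0$ and $0\in J(f)$, so $\mathrm{int}\,C_0\not\subset F(f)$. Finally, even if one did know $\mathrm{int}\,C_n\subset U_n$, the remark that $C_n$ ``meets $J(f)$'' (which is itself not established --- the repelling fixed point you cite lies at distance $\ge\pi>0.5$ from $2n\pi$, so it is not on $C_n$) would not yield $U_n\subset\mathrm{int}\,C_n$: a Fatou component can easily extend past a circle that merely meets the Julia set. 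You have proved (at best) a lower bound for $U_n$, while~(c) is an \emph{upper} bound.

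The paper proves~(c) by a different, normality-based argument that you should supply. For fixed $N\ge 0$ consider $\phi_n(z)=f^n(z)-2(n+N)\pi$ on $U_N$. Because the $U_n$ are pairwise disjoint, each $\phi_n$ omits the three values $\infty,\,2\pi,\,4\pi$ on $U_N$, so $(\phi_n)$ is normal. Combining this with the contraction estimate $|f'|\le c<1$ on the discs $\Delta_m$ and with~(4.7), one gets a subsequence $\phi_{n_k}\to a_N$ locally uniformly in $U_N$ with $|a_N|\le r_1<0.5$. If $U_N$ met $C_N$, one could join $\Delta_N$ to $C_N$ by a compact arc $\Gamma\subset U_N$, and then $f^n(\Gamma)$ would have to meet both $\Delta_{n+N}$ and $C_{n+N}$ (indeed $C'_{n+N}$) for every $n$, contradicting $\phi_{n_k}\to a_N$ with $|a_N|<0.5$. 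This is the missing ingredient; without some such normality or distortion control, the boundedness of $U_n$ does not follow from~(a) and~(b) alone.
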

\begin{proof}
Recall that $g(z)=z+\lambda \sin(z+a)$ and $f(z)=g(z)+\eps/z$. In view of~(4.3), we have 
\begin{equation}
g(z)=z-\sin z+2\pi\cos z=2\pi-\pi z^2\left(1-\frac{z}{3!\pi}-\frac{2z^2}{4!}+\cdots\right).
\end{equation}
Part~(a) now follows immediately from the estimate
\begin{equation}
\left|-\frac{z}{3!\pi}-\frac{2z^2}{4!}+\cdots\right|<0.1,\quad\text{for }|z|\le 0.5,
\end{equation}
and the facts that $g(z+2\pi)=g(z)+2\pi$ and $0<\eps<1/144$. Part~(b) follows by a similar argument with 
\[g'(z)=-2\pi z\left(1-\frac{z}{2!2\pi}-\frac{z^2}{3!}+\cdots\right).\]

To prove part~(c), we first show that, for each $N\ge 0$, the family 
\[\phi_n(z)=f^n(z)-2(n+N)\pi,\quad n\ge 0,\] 
is normal in $U_N$. This holds because the components $U_n,\,n\ge 0$, are disjoint, so $f^n(z)\ne 2m\pi$, for $m>n+N,\,z\in U_N$, and hence each function $\phi_n$ omits in $U_N$ the three values
\[
\infty,\quad 2(n+1+N)\pi-2(n+N)\pi= 2\pi\quad\text{and}\quad 2(n+2+N)\pi-2(n+N)\pi= 4\pi.
\]
Using~(4.4) and making a smaller choice of $\eps$ if necessary, we deduce that 
\[|f'(z)|\le c,\qfor |z-2n\pi|\le r_1,\, n\ge 1,\]
for some $c,\,0<c<1$. Thus $f$ is contracting on each disc $\Delta_n,\,n\ge 1$. By~(4.7), for each $N\ge0$, we have diam $f^n(\Delta_N)\to 0$ as $n\to\infty$, so there exists $a_N$ with $|a_N|\le r_1<1/2$ and a subsequence $n_k$ such that
\begin{equation}
\phi_{n_k}(z)\to a_N\;\text{ as }k\to\infty,\quad\text{locally uniformly in }U_N.
\end{equation}
 
Now suppose for a contradiction that $U_N\cap C_N\ne\emptyset$, for some $N\ge 0$. Then we can join a point $z_N$ of $\Delta_N$ to a point $w_N\in C_N$ by a compact curve $\Gamma$ lying in $U_N$. Since $f^n(z_N)\in\Delta_{n+N}$ for all $n>0$, we deduce that $f^n(\Gamma)$ meets $C_{n+N}$ and $C'_{n+N}$ for all $n>0$. This contradicts~(4.10) and completes the proof of Lemma~\ref{imagecircle}. 
\end{proof}

We now continue the proof of Example~\ref{ex2}. Since the components $U_n$ are all bounded, we deduce that $U_n=f^n(U_0),\,n\ge 0$, by Lemma~\ref{Herring}(b).

We can now deduce that the components $U_n,\,n\ge1$, are all simply connected. Indeed, if $N\ge 1$ and $\gamma_N$ is a Jordan curve in $U_N$ which is not null-homotopic in $U_N$, then for some $n\ge 0$ the set $\wt{f^n(\gamma_N)}$ must contain a pole of $f$, by Lemma~\ref{pole}, and this is impossible by Lemma~\ref{imagecircle}(c).

Finally, we show that $U_0$ is doubly connected. To do this we use the Riemann--Hurwitz formula
\begin{equation}
c(U_0)-2=k_0(c(U_1)-2)+N_0,
\end{equation}
where $k_0$ is the degree of the (proper) mapping $f:U_0\to U_1$ and $N_0$ is the number of critical points of $f$ in $U_0$; see Lemma~\ref{RiemannH}(b).

By Lemma~\ref{imagecircle}(a), with $n=0$, and the argument principle, the set $\{z\in \text{int}\, C_0:f(z)=2\pi\}$ contains three points, counted according to multiplicity. By~(4.8) and~(4.9), and the fact that $f(z)=g(z)+\eps/z$, these three points are close to $re^{2\pi ik/3},\,k=0,1,2$, where $r=\sqrt[3]{\eps/\pi}$. Each of these three pre-images of $2\pi$ must lie in $U_0$, since 
\[f(\Delta_0\cup\Delta'_0)\subset B(2\pi,6\sqrt{\eps})\subset \Delta_1\subset U_1,\quad\text{where } \Delta'_0=\{z:2\sqrt{\eps}\le|z|\le\sqrt[3]{\eps}\},\]
as can easily be checked using~(4.6),~(4.8) and~(4.9). Note that $\sqrt[3]{\eps}>2\sqrt{\eps}$, since $0<\eps<1/144$. Hence $k_0=3$, by Lemma~\ref{imagecircle}(c). By Lemma~\ref{imagecircle}(b), with $n=0$,  and the argument principle, the set $\{z\in \text{int}\, C_0:f'(z)=0\}$ contains three points, counted according to multiplicity, so $N_0\le 3$. Also, $c(U_1)=1$, so
\[c(U_0)=2+3(-1)+N_0\le 2,\]
by~(4.11). Since $U_0$ is multiply connected, we deduce that $c(U_0)=2$, as required.
\end{proof}

Our next example shows that Theorem~\ref{bddU}(b) is false for an unbounded Fatou component, even for $f\in M_F$. Here we use the approximation technique introduced by Eremenko and Lyubich~[\ref{EL}]. 
\begin{example}
There exists a function $f\in M_F$ with a bounded {\sconn} {\wand} $U$ such that \begin{itemize}
\item[(a)]
$f(U)$ is an unbounded {\sconn} component of $F(f)$ and $\partial f(U)$ consists of two unbounded components;
\item[(b)] 
$f^2(U)$ is a bounded {\dconn} component of $F(f)$;
\item[(c)]
$f^n(U),\, n\ge 3$, are bounded {\sconn} components of $F(f)$.
\end{itemize}
Thus $U_1=f(U)$ is unbounded and $c(U_1)=1<2=c(U_2)$.
\end{example} 

\begin{proof} 
Throughout this construction the parameters $\lambda$, $a$ and $\eps$ are the same as in Example~2, as are the sets $\Delta_n,\,n\ge 0$. In particular, $0<\eps<1/144$. We then define 
\[g_1(z)=z+\lambda \sin(z+a),\quad g_2(z)=4e^{z}-\eps/z\quad\text{and}\quad g_3(z)=0.\] 
Note that $g_1$ is the function called $g$ in Example~2. Also, let
\[E_1=\{z:\Re(z)\ge -0.6\},\quad E_2=\{z:\Re(z)\le -1.4\}\quad\text{and}\quad E_3=\{z:|z+1|\le 0.2\}.\]
It follows from Arakelyan's theorem [\ref{dG85}] that, for any $\delta>0$, there exists a transcendental entire function $g$ such that
\begin{equation}
|g(z)-g_k(z)|<\delta/2,\quad\text{for }z\in E_k, \, k=1,2,3,
\end{equation}
and $g$ is symmetric with respect to the real axis. The following lemma then completes the proof of Example~4.
\end{proof}
  
\begin{lemma}\label{approximation}
We can choose $\delta>0$ such that if $g$ is constructed as above, then the transcendental meromorphic function 
\begin{equation}
f(z)=g(z)+\frac{\eps}{z}+\frac{\delta/5}{z+1}
\end{equation} 
has the following properties.
\begin{itemize}
\item[(a)]
$F(f)$ has a sequence of components $V_n,\,n\ge 0$, with similar properties to the components $U_n$ in Example~\ref{ex2} (and Lemma~\ref{imagecircle}); in particular, $V_0$ is doubly connected, $V_n,\,n\ge 1$, are simply connected, and
\[\Delta_n\subset V_n\subset \{z:|z-2n\pi|<0.5\},\quad\text{for }n\ge 0.\]
\item[(b)]
$F(f)$ has an unbounded {\sconn} component $U'$ whose boundary $\partial U'$ consists of two unbounded components, such that $f(U')=V_0$.
\item[(c)]
$F(f)$ has a bounded {\sconn} component $U$ such that $f(U)=U'$.
\end{itemize}
\end{lemma}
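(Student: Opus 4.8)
The plan is to build the function $f$ by a careful choice of the Arakelyan approximation parameter $\delta$, and then to verify the three assertions (a), (b), (c) by comparing $f$ with the model functions $g_1,g_2,g_3$ on the sets $E_1,E_2,E_3$, using the estimates already developed for Example~2 together with Montel's theorem and the Riemann--Hurwitz formula. Since on $E_1$ the function $f$ is a small perturbation of $z\mapsto g_1(z)+\eps/z$, which is exactly the function studied in Example~2, the sets $\Delta_n$ ($n\ge 0$) are mapped essentially as before; provided $\delta$ is small enough (and noting $E_1$ contains each $\Delta_n$, $n\ge0$, as well as the circles $C_n$, $C'_n$) the proof of Lemma~\ref{imagecircle} goes through verbatim for $f$, giving a sequence of Fatou components $V_n$ with $\Delta_n\subset V_n\subset\{|z-2n\pi|<0.5\}$, with $V_0$ doubly connected, $V_n$ ($n\ge1$) simply connected, and $f(V_n)=V_{n+1}$. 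This establishes~(a).

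For part~(b), the idea is to exploit the fact that on $E_2=\{\Re(z)\le-1.4\}$ the function $f$ is close to $g_2(z)=4e^z-\eps/z$. First I would locate a suitable topological annulus (or round circle) $A$ in the left half-plane, far enough to the left, whose image under $g_2$, and hence under $f$, winds once around $0$ and lands inside $\Delta_0$; since $|4e^z|$ is small for $\Re(z)$ very negative and $-\eps/z$ then dominates and parametrises a small loop about $0$, one can arrange $f(A)\subset\Delta_0\subset V_0$. By Montel (using that $f^n$ then lands in the $\Delta_n$, which omit the poles and enough other values) the component $U'$ of $F(f)$ containing $A$ lies in $F(f)$ and satisfies $f(U')\subset V_0$; a connectivity/degree count via Lemma~\ref{RiemannH} forces $f(U')=V_0$ and pins down the connectivity of $U'$. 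The unboundedness of $U'$ comes from the fact that $\Re(z)\to-\infty$ keeps us in the region where $f\approx g_2$ and the dynamics stays tame, so $U'$ contains a left-infinite strip-like region; the statement that $\partial U'$ has exactly two unbounded components should follow from the geometry of that strip (its boundary separates into an upper and a lower piece going to $\infty$), checked against $J(f)$ being symmetric in the real axis.

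For part~(c), the point of the model $g_3\equiv 0$ on $E_3=\{|z+1|\le 0.2\}$ together with the explicit extra term $\frac{\delta/5}{z+1}$ is that near $z=-1$ the function $f$ behaves like $\frac{\delta/5}{z+1}+(\text{small})$, i.e.\ like a pole with small residue, so a small circle about $-1$ is blown up by $f$ to a large circle. I would choose a small circle $S$ centred at $-1$ (inside $E_3$) whose image $f(S)$ is a huge curve winding once around the whole configuration, in particular around the strip-like region defining $U'$; arranging $f(S)\subset U'$ (which needs $\delta$ chosen so that $f$ maps a suitable disc about $-1$ into the unbounded tameness region of $U'$, again using $f\approx g_3 + \frac{\delta/5}{z+1}$ there) makes the component $U$ of $F(f)$ containing $S$ a Fatou component with $f(U)\subset U'$, hence $f(U)=U'$ by Lemma~\ref{Herring}(b) since $U$ is bounded; $U$ is bounded because it sits inside $E_3$'s disc region away from $J(f)$, and it is simply connected by a Riemann--Hurwitz count (the relevant preimage region contains no pole of $f$ other than being pulled back from the annular $U'$, so one checks $c(U)=1$).

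The main obstacle I expect is part~(b): one must simultaneously guarantee that $U'$ is \emph{genuinely unbounded} (not just large), that $f(U')$ is exactly $V_0$ and that $V_0$ is not accidentally enlarged or merged with $U'$, and that $\partial U'$ has precisely two unbounded components rather than one or more. This requires a delicate choice of the left-half-plane region and a careful argument that the orbit of a neighbourhood of that region stays in the union of the $\Delta_n$ and hence in $F(f)$ — i.e.\ that the approximation error, the $\eps/z$ term and the $\frac{\delta/5}{z+1}$ term are all uniformly controlled there — and then an argument-principle computation on the outer and inner boundary curves to fix the connectivities. The interplay between the three model regions (making sure the perturbations needed for (b) do not spoil (a) or (c), and vice versa) is what forces the single small parameter $\delta$ to be chosen after all three geometric conditions have been written down; I would organise the proof so that each of (a), (b), (c) imposes an explicit upper bound on $\delta$, and then take $\delta$ below the minimum of the three.
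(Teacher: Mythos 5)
Your treatment of part (a) matches the paper (the approximation on $E_1$ preserves all the estimates of Example~2 for small $\delta$), and your treatment of part (c) is essentially the paper's argument (near $z=-1$ the term $\frac{\delta/5}{z+1}$ dominates and maps a small punctured disc $D$ about $-1$ univalently onto a region containing $U'$, so that $U=f^{-1}(U')\cap D$ works). However, your argument for part (b) rests on a misreading of the construction. You write that ``$|4e^z|$ is small for $\Re(z)$ very negative and $-\eps/z$ then dominates and parametrises a small loop about $0$.'' But $g_2(z)=4e^z-\eps/z$ was chosen precisely so that when you form $f(z)=g(z)+\eps/z+\frac{\delta/5}{z+1}$ with $g\approx g_2$ on $E_2$, the $-\eps/z$ in $g_2$ and the $+\eps/z$ in $f$ cancel, leaving $f(z)\approx 4e^z$ on $E_2$. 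The function $f$ does not behave like $-\eps/z$ far to the left; it behaves like the exponential $4e^z$.

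This changes the geometry completely. In the paper, $U'$ is located inside a vertical strip $S=\{-C<\Re z<-2\}$ on which $f\approx 4e^z$ is a covering map onto an annulus containing $V_0$; the vertical line $\Re z=\ln(\sqrt{\eps}/4)$ maps under $4e^z$ to the circle $|w|=\sqrt{\eps}\subset\Delta_0\subset V_0$, winding infinitely often. The resulting component $U'$ of $f^{-1}(V_0)$ is a wavy vertical strip, unbounded in the imaginary direction (not a ``left-infinite strip-like region''), and its two unbounded boundary components are the preimages under the covering $f|_S$ of the inner and outer boundary components of $\partial V_0$ — roughly parallel vertical curves, not ``an upper and a lower piece.'' One also needs the auxiliary facts (proved in the paper) that $f$ has no finite asymptotic values in $S$, that $f'\ne 0$ on $S$, and that $|f|$ is bounded away from $0$ on $S$ while $|f|>0.5$ on $\Re z=-2$, to ensure $f:S\to f(S)$ is a genuine covering and that the preimage of $V_0$ does not escape $S$ or pick up extra boundary components. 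Your proposal of a single topological annulus $A$ whose $f$-image winds once around $0$ does not arise here — under $4e^z$ a bounded curve far to the left maps to a tiny distorted loop, not something that winds about $0$ — and the Riemann--Hurwitz count you invoke to force $f(U')=V_0$ is not needed once one identifies $U'$ as a component of $f^{-1}(V_0)$ via the covering map.

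So the proof of (b) as you have outlined it does not work; the key missing idea is the cancellation of $\eps/z$ and the fact that the mechanism producing an unbounded $U'$ with two unbounded boundary components is the exponential covering of an annulus by a strip, not a single loop in the left half-plane.
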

\begin{proof}
Let $f_1(z)=g_1(z)+\eps/z$, so $f_1$ is the function called $f$ in Example~2. The proof of Example~2 depended on several properties of $f_1$. Part~(a) of Lemma~\ref{approximation} will follow if we show that these properties are also true for the function $f$ in this example.

First, $f_1$ is symmetric in the real axis and belongs to $M_F\cap M$, properties which are also true for the function $f$ defined by~(4.13).

Next, the proof of Example~2 depended on a finite number of statements, such as~(4.5) and Lemma~\ref{imagecircle}, all involving values of $z$ in $E_1$ and various small positive constants such as~$r_1$, which are true for the function $f_1$ and which remain true for the function~$f$ if we choose~$\delta>0$ small enough; for example, we have 
\[|f(z)-f_1(z)|=\left|g(z)-g_1(z)+\frac{\delta/5}{z+1}\right|< \delta,\qfor z\in E_1,\]
so~(4.5) is true for $f$ if $\delta>0$ is small enough, and 
\[|f'(z)-f'_1(z)|\le 10\delta, \qfor \Re(z)\ge -0.5,\] 
by Cauchy's estimate. Thus the statement~(4.10) in the proof of Lemma~\ref{imagecircle} is also true for~$f$ if~$\delta>0$ is small enough. 
 
To prove part~(b), we show that a certain component $U'$ of the pre-image of $V_0$ under $f$ is an unbounded {\sconn} component of $F(f)$. First, recall that \[\Delta_0=\{z:\sqrt{\eps}/2<|z|<2\sqrt{\eps}\}.\] 
It follows from~(4.12) and~(4.13) that if $\delta>0$ is small enough, then there exists $\rho>0$, depending on $\eps$ but not on $\delta$, such that $V_0$ surrounds $\{z:|z|\le\rho\}$. In particular, $\rho\le\sqrt{\eps}/2$. Then we take $C$ such that $8e^{-C}<\rho$, put
\[S=\{z:-C<\Re(z)<-2\},\]
and further require that $0<\delta<2e^{-C}$.

Let $\phi(z)=f(z)-4e^z$. Then, by~(4.12) and~(4.13), we have
\[|\phi(z)|=\left|g(z)-g_2(z)+\frac{\delta/5}{z+1}\right|<\delta,\qfor z\in E_2,\]
and hence
\[|\phi'(z)|<\frac{\delta}{0.6}<2\delta,\qfor z\in S,\]
by Cauchy's estimate. Now, 
\[|f(z)|\ge |4e^z|-|\phi(z)|>4e^{-C}-\delta>2e^{-C},\qfor z\in S,\]
so any path in $S$ which tends to $\infty$ is mapped by $f$ to a path which winds infinitely often round $\{z:|z|\le 2e^{-C}\}$. Hence $f$ has no finite asymptotic values in $S$. Also, since $0<\delta<2e^{-C}<\rho/4\le\sqrt{\eps}/8<1/96$, we have
\[|f(z)|>4e^{-2}-\delta>0.5,\qfor \Re(z)=-2,\]
\[0<4e^{-C}-\delta<|f(z)|<4e^{-C}+\delta<\rho,\qfor \Re(z)=-C,\]
and 
\[|f'(z)|=|4e^z+\phi'(z)|\ge 4e^{-C}-2\delta>0,\qfor z\in S.\]
It follows that $f:S\to f(S)$ is a covering map and $\partial f(S)$ lies outside $V_0$, by part~(a). Also, since $0<\delta<\sqrt{\eps}/8$, the vertical line $\{z:\Re(z)=\ln(\sqrt{\eps}/4)\}$ in $S$ is mapped by $f$ to a path in $\Delta_0\subset V_0$, which winds infinitely often round~0. Thus $f^{-1}(V_0)$ has a component $U'$ which is an unbounded {\sconn} domain contained in $S$, bounded by two unbounded continua in $S$ which are components of the pre-images under $f$ of the inner and outer components of $\partial V_0$. Thus $U'$ is a Fatou component of $f$ and $f(U')=V_0$, by Lemma~\ref{Herring}(a).
 
Now we show that $f$ is univalent on the punctured disc $D=\{z:0<|z+1|<\sqrt{\delta}/2\}$, which is contained in $E_3=\{z:|z+1|\le 0.2\}$. Put $h(z)=g(z)+\eps/z$. Then, by~(4.12) and~(4.13),
\[|h(z)|\le\frac{\delta}{2}+\frac{\eps}{0.8}<\frac{1}{50},\qfor z\in E_3,\]
since $0<\eps<1/144$ and $0<\delta<1/96$. Thus, by Cauchy's estimate,
\[|h'(z)|\le \frac{1}{50\,(0.2-\sqrt{\delta}/2)}<1/5,\qfor z\in \overline{D}.\]
Now suppose that $f(z_1)=f(z_2)$, where $z_1,z_2\in D$. Then
\[\left|\frac{\delta/5}{z_1+1}-\frac{\delta/5}{z_2+1}\right|
=|h(z_1)-h(z_2)|\le \frac{1}{5}|z_1-z_2|,\]
so $\delta\le |z_1+1|\,|z_2+1|\le(\sqrt{\delta}/2)^2$, which is false. Hence $f$ is one-one on $D$.

Also, for $z\in \partial D\setminus\{-1\}$, we have
\[|f(z)|=\left|h(z)+\frac{\delta/5}{z+1}\right|\le |h(z)|+\frac{\delta/5}{|z+1|}\le\frac{\delta}{2}+\frac{\eps}{0.8}+\frac{2\sqrt{\delta}}{5}
\le \sqrt{\eps},\]
provided that we also have $0<\delta<\eps$. For such $\delta$, the function $f$ maps $D$ univalently onto a domain which contains $\{z:|z|>\sqrt{\eps}\}$ and hence contains the component $U'$, since $\{z:|z|=\sqrt{\eps}\}\subset V_0$. Therefore $f^{-1}(U')$ has a bounded {\sconn} component~$U$ in $D$, which is a component of $F(f)$ such that $f(U)=U'$ and $-1\in \overline{U}$. This completes the proof of Lemma~\ref{approximation}.
\end{proof}

Our next example shows that Theorem~2(c) is also false for an unbounded Fatou component, even for $f\in M_F$.
\begin{example}
There exists a function $f\in M_F$ with a bounded {\iconn} {\wand} $U$ such that \begin{itemize}
\item[(a)]
$f(U)$ is an unbounded {\iconn} component of $F(f)$;
\item[(b)] 
$f^2(U)$ is contained in a bounded {\dconn} component of $F(f)$;
\item[(c)]
$f^n(U),\, n\ge 3$, are contained in bounded {\sconn} components of $F(f)$.
\end{itemize}
Thus $U_1=f(U)$ is unbounded and {\iconn}, and the eventual connectivity of~$U_1$ is~1.
\end{example}

\begin{proof}
The proof is similar to that of Example~3, but we replace the function $g_2$ used in that proof by
\[g_2(z)=e^{z}-\sqrt{\eps}-\frac{\eps}{z},\]
and then define $g$ and $f$, as before, to be symmetric in the real axis and satisfy~(4.12) and~(4.13). Recall that $\Delta_0=\{z:\sqrt{\eps}/2<|z|<2\sqrt{\eps}\}$, so $-\sqrt{\eps}\in\Delta_0$, and also that $0<\eps<1/144$.

As in Lemma~\ref{approximation}(a), we can take $\delta>0$ so small in~(4.12) and~(4.13) that $F(f)$ has a sequence of components $V_n,\,n\ge 0$, with similar properties to the components $U_n$ in Example~\ref{ex2} (and Lemma~\ref{imagecircle}); in particular, $V_0$ is doubly connected, $V_n,\,n\ge 1$, are simply connected, and
\begin{equation}
\Delta_n\subset V_n\subset \{z:|z-2n\pi|<0.5\},\quad\text{for }n\ge 0.
\end{equation}

Now, we introduce the connected compact set 
\[K=\{z:|z|=3\sqrt{\eps}/2\}\cup[-3\sqrt{\eps}/2,-5\sqrt{\eps}/4]\cup
\{z:|z+\sqrt{\eps}|=\sqrt{\eps}/4\},\]
which is a subset of $\Delta_0$, and put
\[L=\exp^{-1}(K+\sqrt{\eps}).\]
Then $L$ is an unbounded `vertical ladder' (the left edge straight and the right edge wavy), which has infinitely many horizontal rungs and is invariant under translation by $2\pi i$. We have $L\subset E_2$, since  $\ln(5\sqrt{\eps}/2)<-1.4$.
By~(4.12) and~(4.13), we have
\begin{equation}
|f(z)-e^z+\sqrt{\eps}|=\left|g(z)-g_2(z)+\frac{\delta/5}{z+1}\right|<\delta, 
\quad\text{for }z\in E_2,
\end{equation}
so 
\begin{equation}
f(z)\in \Delta_0\subset V_0,\quad\text{for }z\in L,
\end{equation}
provided that $0<\delta<\sqrt{\eps}/4$. Thus the set $L$ must lie in an unbounded component $U'$ of $F(f)$ such that $f(U')\subset V_0$. Now, the inner boundary component,~$\alpha_0$ say, of the doubly connected component $V_0$ is surrounded by~$\Delta_0$. Thus~(4.15) and~(4.16) imply that the image under~$f$ of the boundary of each hole of the ladder~$L$ must wind once round~$\alpha_0$. Hence, by the argument principle, each of the holes of~$L$ must contain a pre-image of~$\alpha_0$ under~$f$, so the component $U'$ is infinitely connected.

To complete the proof, we again use the fact that, for small enough $\delta>0$, the function $f$ maps the punctured disc $D=\{z:0<|z+1|<\sqrt{\delta}/2\}$ univalently onto a domain which contains $\{z:|z|>\sqrt{\eps}\}$.
\end{proof}

Our final example shows that Theorem~\ref{component} is false if $U$ is unbounded. See [\ref{Q94}, Theorem~1] and [\ref{BD99}, Theorem~6.1] for related properties of the Julia set of this function.

\begin{example}
The function $f(z)=ze^z$ has an unbounded immediate parabolic basin $U$ whose boundary $\partial U$ has components $\alpha$ and $\alpha'$ such that $f(\alpha)=\alpha'\setminus\{0\}$. 
\end{example}
\begin{proof}
The function $f$ has a parabolic fixed point at~0, with an associated immediate parabolic basin~$U$ that contains $(-\infty,0)$. The only singular values of $f$ are the finite asymptotic value~0 and the critical value $f(-1)=-1/e$.

Let $\Omega=\{z:\Re (z)\le 0, |\Im(z)|\le \pi/2\}$ and let $\Gamma^{\pm}$ be the parts of $\partial\Omega$ in the upper and lower open half-planes. Simple estimates show that 
\[
 f(\Omega\setminus\{0\})\subset {\rm int}\,\Omega,
\]
so $\Omega\setminus\{0\}\subset U$. Then take $G=\C\setminus \Omega$. Let $g$ be the branch of $f^{-1}$ such that $g(0)=0$, defined on a neighbourhood of~0, and analytically continue $g$ to $\C\setminus (-\infty,0]$ by using the monodromy theorem. Then $g(G)\supset (0,\infty)$, but
\[g(G)\cap \partial G=\emptyset,\quad\text{since}\quad 
f(\partial G\setminus\{0\})\subset \Omega.\]
Thus $g(G)\subset G$, so $\overline{g^n(G)}$, $n=1,2,\ldots\,$,  forms a decreasing sequence of continua in~$\hat{\C}$ with intersection $\Delta$, say, containing $[0,\infty)$. Then $\Delta\setminus\{{\infty}\}$ is completely invariant under $g$. 

Now let $S=\{z:\Re(z)\ge 0, |\Im(z)|\le \pi\}$ and $H=\{z:\Im(z)>0\}$. By considering the effect of $f$ on each of the half-lines
\[\{x+iy:x\ge 0\},\quad 0\le y\le\pi,\]
we see that $f$ maps the interior of $S\cap H$ univalently onto a {\sconn} domain which contains $G\cap H$.
Thus $g(G)\subset S$ and hence $\Delta\setminus\{{\infty}\}\subset S$. We can then deduce that $\Delta\setminus\{{\infty}\}=[0,\infty)$ by considering a point of $\Delta$ with maximal argument, and using the fact that $\arg f(z)=\arg z+y$, for $z\in S$.

We have $(0,\frac12\pi i)\subset U\cap \partial(S\cap H)$ and $f((0,\frac12\pi i))\subset {\rm int}\,\Omega\cap H\subset U\cap H$. Thus $g({\rm int}\,\Omega)\cap {\rm int}\,\Omega\ne\emptyset$, so both $g({\rm int}\,\Omega)$ and $g(\Gamma^+)$ are subsets of $U$, and the same therefore holds for $g^n(\Gamma^+)$, for all $n\ge 0$. Since $[0,\infty)$ does not meet $U$ and the curves $g^n(\Gamma^+)$ tend to $[0,\infty)$, we deduce that $\alpha'=[0,\infty)$ is contained in $\partial U$ and moreover forms a component of~$\partial U$.
 
Next let $h$ denote the branch of $f^{-1}$ which maps the interval $[-1/e,0)$ to $(-\infty,-1]$. We can analytically continue $h$ to $H$, and from $H$ across the three intervals of $\R\setminus \{0,-1/e\}$. Therefore the image of $H$ under $h$ is a domain bounded by three curves
\[h((-\infty,-1/e)),\quad h([-1/e,0))=(-\infty,-1],\quad h((0,\infty)),\]
each of which is a solution curve of the equation $\Im(ze^z)=0$. In particular, the curve $\alpha=h((0,\infty))$ is a complete branch of the graph $x=-y \cot y$. 

Now $\alpha\subset J(f)$, since $(0,\infty)\subset J(f)$. Also, $h(U\cap H)\subset U$, so $\alpha\subset \partial U$, since $(0,\infty)\subset \partial U$. Moreover $\alpha$ is a component of $\partial U$ since it is a maximal connected subset of $f^{-1}([0,\infty))$. However, $f(\alpha)=(0,\infty)=\alpha'\setminus\{0\}$ is not a component of $\partial U$, so the proof is complete.
\end{proof}

\section{References}
\begin{enumerate}

\item\label{iB75} I.N. Baker, The domains of normality of an entire function,
{\it Ann. Acad. Sci. Fenn. Ser. A I Math.\,}, 1 (1975), 277--283.

\item \label{iB76} I.N. Baker, An entire function which has wandering domains, {\it J. Austral. Math. Soc. Ser. A}, 22 (1976), 173--176.


\item \label{iB85} I.N. Baker, Some entire functions with multiply-connected wandering domains, {\it Ergodic Theory Dynam. Systems}, 5 (1985), 163--169.

\item\label{iB87} I.N. Baker, Wandering domains for maps of the punctured plane,
{\it Ann. Acad. Sci. Fenn. Ser. A I Math.\,}, 12 (1987), 191--198.

\item\label{BD99} I.N. Baker and P. Dom{\' i}nguez, Boundaries of unbounded Fatou components of entire functions, {\it Ann. Acad. Sci. Fenn. Math.\,}, 24 (1999), 437--464.

\item \label{BKL1} I.N. Baker, J. Kotus and L{\" u} Yinian, Iterates of meromorphic functions II: Examples of {\wand}s, {\it J. London Math. Soc.}, 42 (1990), 267--278.

\item \label{BKL2} I.N. Baker, J. Kotus and L{\" u} Yinian, Iterates of meromorphic functions I, {\it Ergodic Theory Dynam. Systems}, 11 (1991), 241--248.

\item \label{BKL3} I.N. Baker, J. Kotus and L{\" u} Yinian, Iterates of meromorphic functions III: Preperiodic domains, {\it Ergodic Theory Dynam. Systems}, 11 (1991), 603--618.

\item \label{wB93} W. Bergweiler, Iteration of meromorphic functions, {\it Bull. Amer. Math. Soc.}, 29 (1993), 151--188.

\item \label{wB95} W. Bergweiler, Invariant domains and singularities,
{\it Math. Proc. Camb. Phil. Soc.\,}, 117 (1995), 525--532.



\item \label{aB99} A. Bolsch, Periodic Fatou components of meromorphic functions, {\it Bull. London. Math. Soc.}, 31 (1999), 543--555.


\item \label{rD89} R.L. Devaney, Dynamics of entire maps, {\it Dynamical systems and ergodic theory}, Banach Center Publications 23, Polish Scientific Publishers, 1989, 221--228.


\item \label{Pat2} P. Dom\'{\i}nguez, Dynamics of transcendental meromorphic
functions, {\it Ann. Acad. Sci. Fenn. Math. Ser. A} (1), 23 (1998), 225--250.

\item \label{EL} A. Eremenko and M. Lyubich, Examples of entire functions with pathological dynamics, {\it J. London Math. Soc.} (2), 36 (1987), 458--468.

\item\label{dG85} D. Gaier, {\em Lectures on complex approximation}, Birkh{\"a}user, 1985.


\item \label{mH84} M. Herman, Exemples de fractions rationelles ayant une orbit dense sur la sph{\` e}re de Riemann, {\it Bull. Soc. Math. France}, 112 (1984), 93--142.

\item \label{mH98} M.E. Herring, Mapping properties of Fatou components, {\it Ann. Acad. Sci. Fenn. Math.}, 23 (1998), 263--274.

\item\label{K98} M. Kisaka, On the connectivity of Julia sets of {\tef}s, {\it Ergodic Theory
Dynam. Systems}, 18 (1998), 189--205.

\item\label{KS06} M. Kisaka and M. Shishikura, On multiply connected wandering domains of entire functions, To appear in {\em Transcendental dynamics and complex analysis}, Cambridge University Press, 2007.

\item \label{cMwR91} C. Mueller and W. Rudin, Proper holomorphic maps of plane regions,
{\it Complex Variables}, 17 (1991), 113--121.


\item \label{Nev} R. Nevanlinna, {\it Analytic functions}, Springer, 1970.

\item \label{New} M.H.A. Newman, {\it Elements of the topology of plane sets of points},
Cambridge University Press, 1961.

\item \label{kN60} K. Noshiro, {\it Cluster sets}, Springer-Verlag, 1960.

\item\label{Q94} J.Y. Qiao, The Julia set of the mapping $z \to z\exp(z+\mu)$,
{\it Chinese Science Bulletin\,}, 139 (1994), 529--533.

\item\label{QW06} L. Qiu and S. Wu, Weakly repelling fixed points and multiply-connected wandering domains of meromorphic functions, {\it Sci. China Ser. A}, 49 (2006), no.~2, 267--276.


\item\label{lR04} L. Rempe, On a problem of Herman, Baker and Rippon concerning Siegel disks, {\em Bull. London Math. Soc.\,}, 36 (2004), 516--518.

\item\label{pR94} P.J. Rippon, On the boundaries of certain Siegel discs, {\em C.R. Acad. Sci. Paris, S{\' e}rie I\,}, 319 (1994), 821--826.

\item\label{RS00} P.J. Rippon and G.M. Stallard, On sets where iterates of a
meromorphic function zip towards infinity, {\it Bull. London Math. Soc.},
32 (2000), 528--536.


\item\label{RS05} P.J. Rippon and G.M. Stallard, Escaping points of meromorphic functions with a finite number of poles, {\it Journal d'Analyse Math.\,}, 96 (2005), 225--245.


\item \label{gS91} G.M. Stallard, A class of meromorphic functions with no {\wand}s, {\it Ann. Acad. Sci. Fenn. Ser. A I Math.\,}, 16 (1991), 211--226.

\item\label{nS93} N. Steinmetz, {\em Rational iteration}, de Gruyter, 1993.



\end{enumerate}

\end{document}